%
%
%
%
%

%
\RequirePackage{fix-cm}
\documentclass[smallextended]{svjour3}       
\smartqed  

\usepackage{mathtools}
\usepackage[normalem]{ulem} 

\usepackage{amssymb}
\usepackage{mathrsfs}




\newcommand{\cut}[1]{{}}

\newcommand{\va}{{\mathbf{a}}}
\newcommand{\vb}{{\mathbf{b}}}

\newcommand{\vp}{{\mathbf{p}}}
\newcommand{\vq}{{\mathbf{q}}}

\newcommand{\vs}{{\mathbf{s}}}
\newcommand{\vt}{{\mathbf{t}}}

\newcommand{\vx}{{\mathbf{x}}}
\newcommand{\vy}{{\mathbf{y}}}
\newcommand{\vz}{{\mathbf{z}}}

\newcommand{\vA}{{\mathbf{A}}}

\newcommand{\vD}{{\mathbf{D}}}

\newcommand{\vI}{{\mathbf{I}}}

\newcommand{\vK}{{\mathbf{K}}}

\newcommand{\vM}{{\mathbf{M}}}

\newcommand{\vP}{{\mathbf{P}}}

\newcommand{\vR}{{\mathbf{R}}}

\newcommand{\vT}{{\mathbf{T}}}

\newcommand{\vW}{{\mathbf{W}}}

\newcommand{\cH}{{\mathcal{H}}}

\newcommand{\cS}{{\mathcal{S}}}

\newcommand{\cX}{{\mathcal{X}}}
\newcommand{\cY}{{\mathcal{Y}}}


\newcommand{\vzero}{\mathbf{0}}
\newcommand{\vone}{{\mathbf{1}}}

\newcommand{\dom}{{\mathrm{dom}}} 

\newcommand{\prox}{\mathbf{prox}}

\newcommand{\Null}{\mathbf{Null}}
\newcommand{\Span}{\mathbf{span}}

\makeatletter
\let\@@span\span
\def\sp@n{\@@span\omit\advance\@multicnt\m@ne}
\makeatother

\DeclareMathOperator*{\argmin}{arg\,min}

\DeclareMathOperator*{\Min}{minimize}


\newcommand{\bc}{\begin{center}}
\newcommand{\ec}{\end{center}}

\newcommand{\bdm}{\begin{displaymath}}
\newcommand{\edm}{\end{displaymath}}

\newcommand{\beq}{\begin{equation}}
\newcommand{\eeq}{\end{equation}}

\newcommand{\bfl}{\begin{flushleft}}
\newcommand{\efl}{\end{flushleft}}

\newcommand{\bt}{\begin{tabbing}}
\newcommand{\et}{\end{tabbing}}

\newcommand{\beqn}{\begin{align}}
\newcommand{\eeqn}{\end{align}}

\newcommand{\beqs}{\begin{align*}} 
\newcommand{\eeqs}{\end{align*}}  


\newtheorem{assumption}{Assumption}

\usepackage{graphicx}
\usepackage[colorlinks=true,breaklinks=true,bookmarks=true,urlcolor=blue,
citecolor=blue,linkcolor=blue,bookmarksopen=false,draft=false]{hyperref}
\def\email#1{\href{mailto:#1}{#1}}
%
%
%
%
%
%
\begin{document}
	
\title{New convergence analysis of a primal-dual algorithm with large stepsizes \thanks{This work was supported in part by the National Science Foundation (NSF) grants DMS-1621798 and DMS-2012439, the Natural Science Foundation of China grant 62001167.
}}
\titlerunning{a primal-dual algorithm with large stepsizes}        


\author{Zhi Li         \and
	Ming Yan 
}


\institute{Z. Li \at
	Shanghai Key Laboratory of Multidimensional Information Processing, Department of Computer Science and Technology, East China Normal University, Shanghai 200062, China,\\ \email{lizhiupc@gmail.com}    
	\and
	M. Yan \at
	Department of Computational Mathematics, Science and Engineering and Department of Mathematics,
	Michigan State University, East Lansing, MI 48824, USA, \email{myan@msu.edu}
}

\date{Received: date / Accepted: date}
\maketitle

\begin{abstract}
	We consider a primal-dual algorithm for minimizing $f(\vx)+h\square l(\vA\vx)$ with Fr\'echet differentiable $f$ and $l^*$. This primal-dual algorithm has two names in literature: Primal-Dual Fixed-Point algorithm based on the Proximity Operator (PDFP$^2$O) and Proximal Alternating Predictor-Corrector (PAPC). In this paper, we prove its convergence under a weaker condition on the stepsizes than existing ones. With additional assumptions, we show its linear convergence. In addition, we show that this condition (the upper bound of the stepsize) is tight and can not be weakened. This result also recovers a recently proposed positive-indefinite linearized augmented Lagrangian method. In addition, we apply this result to a decentralized consensus algorithm PG-EXTRA and derive the weakest convergence condition.
		
	\keywords{linearized augmented Lagrangian \and primal-dual \and decentralized consensus }
	\subclass{68Q25 \and 68R10 \and 68U05}
\end{abstract}

\section{Introduction}\label{sec:intro}
Minimizing the sum of two functions has applications in various areas including image processing, machine learning, and decentralized consensus optimization~\cite{chambolle2011first,chambolle2016introduction,jaggi2014communication,shi2015proximal}. In this paper, we aim to minimize the sum of two functions in the following form:
\begin{align}\label{for:main_problem}
\Min_{\vx\in\cX}~f(\vx) + h\square l(\vA\vx),
\end{align}
where $\cX$ and $\cS$ are two real Hilbert spaces; $f(\vx):\cX\mapsto(-\infty,+\infty]$, $h(\vs):\cS\mapsto(-\infty,+\infty]$, and $l(\vs):\cS\mapsto(-\infty,+\infty]$ are proper lower semi-continuous (lsc) convex functions; $h\square l$ is the infimal convolution of $h$ and $l$ that is defined as $h\square l(\vs)=\inf_{\vt\in\cS}~h(\vt)+l(\vs-\vt)$; the linear operator $\vA:\cX\mapsto \cS$ is bounded. In addition, we assume that $f(\vx)$ is Fr\'echet differentiable with a Lipschitz continuous gradient, $l$ is strongly convex in $\dom(l)$\footnote{It means that $l^*(\vs)$ (the Legendre-Fenchel conjugate of $l(\vs)$) is Fr\'echet differentiable with a Lipschitz continuous gradient.}, and the proximal operator of $h$, which is defined as
\begin{align*}
\prox_{\lambda h}(\vt)=(\vI+\lambda\partial h)^{-1}(\vt):=\argmin_{\vs\in\cS}~h(\vs)+{1\over {2\lambda}}\|\vs-\vt\|^2,
\end{align*}
has a closed-form solution or can be easily computed. Here $\partial h$ is the subdifferential of the convex function $h$.

Many existing papers considered a special case of~\eqref{for:main_problem} with $l(\vs)$ being the indicator function $\iota_{\{\vzero\}}(\vs)$ that returns 0 if $\vs=\vzero$ and $+\infty$ otherwise.  In this special case, the infimal convolution $h\square l$ degenerates to $h$, and the problem~\eqref{for:main_problem} becomes
\begin{align}\label{for:main_problem_noInf}
\Min_{\vx\in\cX}~f(\vx) + h(\vA\vx).
\end{align}
The corresponding saddle-point problem is  
\begin{align}\label{for:he_saddle}
\min_{\vx\in\cX}\max_{\vs\in\cS}~f(\vx)+\langle \vA\vx,\vs\rangle-h^*(\vs).
\end{align}
If a saddle point $(\vx^\star,\vs^\star)$ exists for~\eqref{for:he_saddle}, then $\vx^\star$ is an optimal solution for~\eqref{for:main_problem_noInf}. 

In order to solve~\eqref{for:main_problem_noInf} (or~\eqref{for:he_saddle}), a primal-dual algorithm was proposed in different fields under different names~\cite{chen2013primal,drori2015simple,loris2011generalization}. Loris and Verhoeven~\cite{loris2011generalization} focused on a particular smooth function $f(\vx)={1\over 2}\|\vK\vx-\vy\|^2$, where $\vK$ is a linear operator and $\vy$ is given. Chen, Huang, and Zhang~\cite{chen2013primal} considered the general problem~\eqref{for:main_problem_noInf} and proposed a Primal-Dual Fixed-Point algorithm based on the Proximity Operator (PDFP$^2$O). Then the same algorithm was rediscovered under the name Proximal Alternating Predictor-Corrector (PAPC) in~\cite{drori2015simple} to solve~\eqref{for:main_problem_noInf} and its extension to a finite sum of composite functions when $h$ is separable. One iteration of the algorithm is
\begin{subequations}\label{for:PD3O_nog_simp}
	\begin{align}
	\vs^{k+1}     & =\left(\vI+{\sigma}\partial h^*\right)^{-1} \left((\vI-\tau\sigma\vA\vA^{\top}) \vs^k + \sigma\vA\left(\vx^k-{\tau}\nabla f(\vx^k)\right)\right), \\
	\vx^{k+1}     & =\vx^k-{\tau}\nabla f(\vx^k) -{\tau}\vA^\top\vs^{k+1}.
	\end{align}
\end{subequations}
Here $\tau$ and $\sigma$ are the primal and dual stepsizes, respectively, and the convergence of this algorithm is shown when $\tau\sigma\|\vA\vA^\top\|\leq 1$ and $2\tau/L< 1$~\cite{chen2013primal}. Here $L$ is the Lipschitz constant of $\nabla f(\vx)$ and $\|\vA\vA^\top\|$ is the operator norm of $\vA\vA^\top$. When $\vA$ is a matrix, $\|\vA\vA^\top\|$ is the largest eigenvalue of $\vA\vA^\top$.

There are many other algorithms for solving~\eqref{for:main_problem_noInf} and its extensions. For example, Condat-Vu~\cite{chambolle2016ergodic,condat2013primal,vu2013splitting} solves a more general problem than~\eqref{for:main_problem_noInf} with an additional non-differential function. However, the corresponding parameters $\tau$ and $\sigma$ have to satisfy $\tau\sigma\|\vA\vA^\top\|+2\tau/L \leq 1$~\cite{2017arXiv170206234K}. When $f(\vx)= 0$, Condat-Vu reduces to Chambolle-Pock~\cite{chambolle2011first}. There are several other primal-dual algorithms for minimizing the sum of three functions, one of which differentiable~\cite{chen2016primal,yan2017three,bot2013douglas,boct2015convergence,combettes2012primal,latafat2017asymmetric,davis2017three}. Interested readers are referred to~\cite{komodakis2015playing,yan2017three} for the comparison of different primal-dual algorithms for minimizing the sum of three functions. All the algorithms mentioned above solve bilinear saddle-point problems in the form of~\eqref{for:he_saddle} or its variants. Recently, many algorithms have been developed to solve more general saddle-point problems with non-bilinear terms~\cite{hamedani2018iteration,hamedani2018primal,hien2017inexact,xu2017first,xu2018primal}. A review for primal-dual algorithms is beyond the scope of this paper, and we focus on the specific primal-dual algorithm PAPC here. 

When there is only one function $f(\vx)$, i.e., $h(\vs)=0$, we let $\vA=\vzero$, and the primal-dual algorithm reduces to the gradient descent with stepsize $\tau$. Therefore, the condition $\tau<2/L$ can not be relaxed. The remaining question is {\it can the condition $\tau\sigma\leq 1/\|\vA\vA^\top\|$ be relaxed?} In~\cite[Section 5.1]{chen2013primal}, the authors numerically showed that a larger stepsize (e.g., $\tau\sigma=4/(3\|\vA\vA^\top\|)$) gives a better performance than stepsizes satisfying the condition $\tau\sigma\leq 1/\|\vA\vA^\top\|$. The convergence for $\tau\sigma<4/(3\|\vA\vA^\top\|)$ was an open problem, and this work resolves it.

For linearized Augmented Lagrangian Method (ALM)~\cite{yang2013linearized}--a special case of the primal-dual algorithm~\eqref{for:PD3O_nog_simp}--the condition $\tau\sigma\leq 1/\|\vA\vA^\top\|$ is relaxed in~\cite{he2016positive}. Consider the constrained optimization problem
\begin{subequations}\label{eq:he_dual} 
	\begin{align*} 
	\Min\limits_{\vs}~&h^*(\vs),	\\
	\textnormal{subject to}~&  -\vA^\top\vs=\vb.
	\end{align*}
\end{subequations}
Its dual problem is 
\begin{align*}
\Min_\vx~\vb^\top\vx+h(\vA\vx),
\end{align*}
which is the problem~\eqref{for:main_problem_noInf} with $f(\vx)=\vb^\top\vx$. The linearized ALM is 
\begin{subequations}\label{for:he_linearizedALM}
	\begin{align}
	\vs^{k+1}     & = \argmin_{\vs}~h^*(\vs) + {\beta\over {2}}\left\|\vs-\vs^k-{1\over {\beta}}\vA(\vx^k-\tau(\vA^\top\vs^k+\vb))\right\|^2, \label{for:he_linearizedALM_a}\\
	\vx^{k+1}     & =\vx^k-\tau(\vA^\top\vs^{k+1}+\vb).
	\end{align}
\end{subequations}
It is exactly the primal-dual algorithm~\eqref{for:PD3O_nog_simp} with $\beta=1/\sigma$. Note that the step in~\eqref{for:he_linearizedALM_a} can be rewritten as
\begin{align*}
\argmin_{\vs}~h^*(\vs) -\langle \vx^k,\vA^\top\vs+\vb\rangle+{\tau\over2}\|\vA^\top\vs+\vb\|_2^2+{1\over2\sigma}\|\vs-\vs^k\|_{\vI-\tau\sigma\vA\vA^\top}^2.
\end{align*}
In~\cite{yang2013linearized}, positive-definiteness of $\vI-\tau\sigma\vA\vA^\top$ is required for showing the convergence. Then the authors in~\cite{he2016positive} relaxed the condition and showed that $(4/3)\vI-\tau\sigma\vA\vA^\top$ being positive definite is the necessary and sufficient condition for the convergence of linearized ALM. That is, this relaxed condition is sufficient for the convergence of linearized ALM, and if the condition is not satisfied, there exists a function $h^*(\vs)$, a linear operator $\vA$, and an initialization such that the algorithm does not converge. This result motivates us to show the convergence of~\eqref{for:PD3O_nog_simp} under a weaker condition. In this paper, we provide the necessary and sufficient condition on $\tau\sigma$ for the convergence of algorithm~\eqref{for:PD3O_nog_simp}. This extension from~\cite{he2016positive} is nontrivial because the function $f(\vx)$ from linearized ALM is linear, i.e., $f(\vx)=\vb^\top\vx$,  and the Lipschitz constant of $\nabla f$ is 0.

Furthermore, we consider the more general problem~\eqref{for:main_problem} with infimal convolution, which was not considered in~\cite{chen2013primal,drori2015simple}, because it provides a tight upper bound for the stepsize of Proximal Gradient EXact firsT-ordeR Algorithm (PG-EXTRA) in decentralized consensus optimization. More details are in Section~\ref{sec:compare}.

In this paper, we relax the parameters for the primal-dual algorithm~\eqref{for:PD3O_nog_simp} and provide a tight bound for the primal and dual stepsizes. This result recovers one special case of the positive-indefinite ALM in~\cite{he2016positive}. Instead of using positive semidefinite operators for primal-dual variables in standard analysis, we allow the operator to be indefinite, see the operator in~\eqref{for:FB1}. Note that the analysis in this paper with indefinite operators is nontrivial because the standard techniques can not be applied. In addition, the linear convergence result is better than existing ones. Finally we apply this result to a decentralized consensus algorithm and obtain its weakest convergence condition.

The rest of this paper is organized as follows. In Section \ref{sec:convergence}, we present the algorithm to solve~\eqref{for:main_problem}. We show its convergence for the general case in Section~\ref{sec:convergence_gen} and linear convergence rates under additional assumptions in Section~\ref{sec:linear_conv}. In Section~\ref{sec:optimal_bdd}, we provide one example to show that the upper bound for its stepsize is tight. The application to a decentralized consensus algorithm is provided in Section~\ref{sec:compare}. Then we end this paper with a short conclusion.

\section{New convergence results with weaker conditions} \label{sec:convergence}
\subsection{A primal-dual algorithm}
In this paper, we extend an existing primal-dual algorithm~\eqref{for:PD3O_nog_simp} to solve~\eqref{for:main_problem} with an infimal convolution and show its convergence results with weaker conditions. Firstly, we explain this algorithm via operator splitting, which is different from those in the literature. Instead of considering problem~\eqref{for:main_problem}, we consider the corresponding saddle-point problem
\begin{align}\label{pro:saddle_point}
\min_\vx\max_\vs f(\vx) +\langle \vA\vx,\vs\rangle -h^*(\vs)-l^*(\vs),
\end{align}
whose optimality condition for a saddle point $(\vx^\star,\vs^\star)$ is
\begin{align*}
\begin{bmatrix} \vzero\\\vzero\end{bmatrix} \in \begin{bmatrix}
0 & \vA^\top \\-\vA & \partial h^*
\end{bmatrix}
\begin{bmatrix} \vx^\star\\\vs^\star\end{bmatrix} + \begin{bmatrix} \nabla f(\vx^\star)\\\nabla l^*(\vs^\star)\end{bmatrix}.
\end{align*}
We apply the following forward-backward operator splitting with self-adjoint positive definite operators $\vP$ and $\vD-\tau\sigma\vA\vP^{-1}\vA^\top$ defined on $\cX$ and $\cS$, respectively:
\begin{align}
&\begin{bmatrix}
\vP & \vzero \\\vzero & \vD-\tau\sigma\vA\vP^{-1}\vA^\top
\end{bmatrix}\begin{bmatrix} \vx^{k}\\\vs^{k}\end{bmatrix} -\begin{bmatrix} \tau\nabla f(\vx^k)\\\sigma\nabla l^*(\vs^k)\end{bmatrix} \nonumber\\
\in & \begin{bmatrix}
\vP & \vzero \\\vzero & \vD-\tau\sigma\vA\vP^{-1}\vA^\top 
\end{bmatrix}\begin{bmatrix} \vx^{k+1}\\\vs^{k+1}\end{bmatrix}+ \begin{bmatrix}
0 & \tau\vA^\top \\-\sigma\vA & \sigma\partial h^*
\end{bmatrix}
\begin{bmatrix} \vx^{k+1}\\\vs^{k+1}\end{bmatrix}. \label{for:FB1}
\end{align}
Here $\tau$ and $\sigma$ are two positive parameters. When $\vP$ and $\vD$ are the identity operators in $\cX$ and $\cY$, respectively, $\tau$ and $\sigma$ are the primal and dual stepsizes, respectively. Different operators $\vP$ and $\vD$ may be chosen in different scenarios. For example, we can choose $\vP$ (or $\vD$) to be a diagonal matrix such that the stepsize is different for different coordinates of $\vx$ (or $\vs$) when $\cX$ (or $\cS$) is finite dimensional. Define $\vM =\frac{\tau}{\sigma}(\vD-\tau\sigma\vA\vP^{-1}\vA^{\top})$. Then, we apply the Gaussian elimination and obtain 
\begin{align*}
\begin{bmatrix}
\vP & \vzero \\\sigma\vA & {\sigma\over\tau}\vM 
\end{bmatrix}\begin{bmatrix} \vx^{k}\\\vs^{k}\end{bmatrix} -\begin{bmatrix} \tau\nabla f(\vx^k)\\\sigma\tau\vA\vP^{-1}\nabla f(\vx^k)+\sigma\nabla l^*(\vs^k)\end{bmatrix} 
\in  \begin{bmatrix}
\vP & \tau\vA^\top \\\vzero & \vD +\sigma\partial h^*
\end{bmatrix}\begin{bmatrix} \vx^{k+1}\\\vs^{k+1}\end{bmatrix}.\end{align*}
Given $(\vx^k,\vs^k)$, one iteration of the primal-dual algorithm is 
\begin{subequations}\label{for:PD3O_nog}
	\begin{align}
	\vs^{k+1}     & =\left(\vD+\sigma\partial h^*\right)^{-1} \left(\frac{\sigma}{\tau}\vM \vs^k + \sigma\vA\left(\vx^k-{\tau}\vP^{-1}\nabla f(\vx^k)\right)-\sigma\nabla l^*(\vs^k)\right), \label{for:PD3O_nog_iteration_b}\\
	\vx^{k+1}    	& =\vx^k-{\tau}\vP^{-1}\nabla f(\vx^k) -{\tau}\vP^{-1}\vA^\top\vs^{k+1}.\label{for:PD3O_nog_iteration_c}
	\end{align}
\end{subequations}
From this analysis, we can easily see that a point $(\vx^\star,\vs^\star)$ is a saddle point of~\eqref{pro:saddle_point} if and only if it is a fixed point of~\eqref{for:PD3O_nog}. Therefore, we only need to show the convergence to a fixed point of~\eqref{for:PD3O_nog}. Note that we could store $\vA^\top\vs$ in the implementation, and the iteration is equivalent to
\begin{subequations}
	\begin{align*}
	\vs^{k+1}     & =\left(\vD+\sigma\partial h^*\right)^{-1} \left(\vD\vs^k  + \sigma\vA\left(\vx^k-{\tau}\vP^{-1}(\nabla f(\vx^k)+\vA^\top\vs^k)\right)-\sigma\nabla l^*(\vs^k)\right),\\
	\vx^{k+1}    	& =\vx^k-{\tau}\vP^{-1}\nabla f(\vx^k) -{\tau}\vP^{-1}\vA^\top\vs^{k+1}.
	\end{align*}
\end{subequations}
Therefore, only one application of $\vA$ and one application of $\vA^\top$ are needed in each iteration.

Let $\vI$ be the identity operator defined on a Hilbert space. For simplicity, we do not specify the space on which it is defined when it is clear from the context. When $l$ is the indicator of a singleton\footnote{It means that $\nabla l^*(\vs)\equiv 0$.}, $\vP=\vI$, and $\vD=\vI$, the iteration of~\eqref{for:PD3O_nog} reduces to~\eqref{for:PD3O_nog_simp}, the existing primal-dual algorithm proposed in~\cite{chen2013primal,drori2015simple,loris2011generalization}. Its convergence is shown if $\vI-\tau\sigma\vA\vA^{\top}$ is positive semidefinite and $\tau< 2/L$ with $L$ being the Lipschitz constant of $\nabla f$.

If the operators $\vP$ and $\vD-\tau\sigma\vA\vP^{-1}\vA^\top$ are positive definite, the convergence of~\eqref{for:PD3O_nog} with an additional condition for $\tau$ can be shown easily from nonexpansive operators with metric~\cite{yan2017three,bauschke2011convex,ryu2016primer}. To the best of our knowledge, this paper is the first one to show the convergence of a primal-dual algorithm when $\vD-\tau\sigma\vA\vP^{-1}\vA^\top$ is not positive definite, and the analysis is different from positive definite cases.

\subsection{Assumptions for new analysis}
An extension of this existing primal-dual algorithm~\eqref{for:PD3O_nog_simp} to~\eqref{for:PD3O_nog} is derived to solve the problem~\eqref{for:main_problem} with an infimal convolution. In addition, we show the convergence of~\eqref{for:PD3O_nog_simp} with a larger $\tau\sigma$. Specifically, we can choose $\tau\sigma$ such that $(4/3)\vD-\tau\sigma\vA\vP^{-1}\vA^{\top}$ is positive semidefinite, i.e., the upper bound for $\tau\sigma$ is increased by $1/3$.  It means that we can choose a larger stepsize $\sigma$ when the primal stepsize $\tau$ is fixed.

For convenience, we introduce two operators as
\begin{align*}
\vM_1 := {{\tau\over\sigma}(\vD-\theta\tau\sigma\vA\vP^{-1}\vA^\top)},\qquad \vM_2 := {{\tau^2}(1-\theta)\vA\vP^{-1}\vA^\top}.
\end{align*}
Here $\theta\in(3/4,1]$ is chosen such that $\vM_1$ is positive definite and $\vM_2$ is positive semidefinite. We can find such $\theta\in(3/4,1]$ whenever $(4/3)\vD-\tau\sigma\vA\vP^{-1}\vA^{\top}$ is positive semidefinite. We would like to emphasize here that $\theta>3/4$ is crucial in the proof of the convergence because we need $4\theta-3$ to be positive. On the other side, $\theta\leq 1$ is required for $\vM_2$ being positive semidefinite. With these two operators, we have $\vM=\vM_1-\vM_2$. In addition, we define a positive definite operator as follows
\begin{align*}
\widetilde\vM  := \vM_1+\vM_2.
\end{align*}
Given a self-adjoint operator $\overline\vM$, we let $\langle \vs,\vt\rangle_{\overline{\vM}}:=\langle \vs, \overline\vM\vt\rangle$ and $\|\vs\|_{\overline{\vM}}^2=\langle \vs, \overline\vM\vs\rangle$. Note that $\|\vs\|_{\overline{\vM}}^2$ can be negative if $\overline{\vM}$ is not positive semidefinite. When $\overline\vM$ is positive definite, we further define the induced norm as $\|\vs\|_{\overline\vM}=\sqrt{\langle\vs,\vs\rangle_{\overline\vM}}$. Let $\lambda_{\min}(\overline\vM)$ be the smallest eigenvalue of $\overline\vM$.  For $(\vx,\vs)\in\cX\times\cS$, we define $\|(\vx,\vs)\|_{\vP,{\overline{\vM}}}^2=\|\vx\|_\vP^2+\|\vs\|^2_{\overline{\vM}}$. 

\begin{assumption} \label{asp:1} Functions $f$, $h$, and $l$ are proper lsc convex. In addition, $f$ is Frechet differentiable and $l$ is strictly convex (i.e., $l^*$ is Frechet differentiable). Operators $\vP$ and $\vM_1$ are positive definite. The iteration~\eqref{for:PD3O_nog} has at least one fixed point. Let $(\vx^\star,\vs^\star)$ be any fixed point of~\eqref{for:PD3O_nog}. For any $\vx\in\cX$ and $\vs\in\cS$, we have
	\begin{align}
	\langle \vx-\vx^\star,\nabla f(\vx)-\nabla f(\vx^\star)\rangle\geq &\beta\|\nabla f(\vx)-\nabla f(\vx^\star)\|_{\vP^{-1}}^2,  \label{for:cocoer_f}\\
	\langle\vs-\vs^\star,\nabla l^*(\vs)-\nabla l^*(\vs^\star)\rangle\geq &\beta\|\nabla l^*(\vs)-\nabla l^*(\vs^\star)\|_{\vM_1^{-1}}^2,	 \label{for:cocoer_l}
	\end{align}
	for some $\beta>0$. 
\end{assumption}

\begin{lemma}
	When $f$ and $l^*$ have Lipschitz continuous gradients with parameters $L_f$ and $L_{l^*}$, respectively, we can choose 
	$$\beta = \min\left(\lambda_{\min}(\vP)L_f^{-1},{\tau\over\sigma}\lambda_{\min}(\vD-\theta\tau\sigma\vA\vP^{-1}\vA^\top)L_{l^*}^{-1}\right) $$ 
	such that Assumption~\ref{asp:1} is satisfied.
	When $\vD$ and $\vP$ are identity matrices, we can simplify it as 
	$$\beta = \min\left(L_f^{-1},{\tau\over\sigma}(1-\theta\tau\sigma\lambda_{\max}(\vA\vA^\top))L_{l^*}^{-1}\right).$$ 
\end{lemma}
The proof for this lemma is simple and omitted.

\begin{remark}\label{remark1}
We choose norms that are different from standard norms for simplicity. They come from the operators $\vP$ and $\vM$ in~\eqref{for:FB1}. 
\begin{itemize}
		\item The condition~\eqref{for:cocoer_f} usually comes from the cocoerciveness of $\nabla f$. It is satisfied with $\beta={\min_{\vx:\|\vx\|=1}\|\vP\vx\|\over L_f}$ if $f(\vx)$ has a Lipschitz continuous gradient with constant $L_f$~\cite[Theorem 18.15]{bauschke2011convex}. One example of $\vP$ is the diagonal matrix when $f$ is separable and the Lipschtiz continuous constants are different for different blocks. By choosing a diagonal matrix $\vP$ we can have a fast algorithm. For example, in~\cite{li2017decentralized}, we let different agent choose different stepsizes to improve the convergence speed.
		\item Note that the condition~\eqref{for:cocoer_l} depends on $\theta$, which does not exist in the algorithm. We choose to have the same $\beta$ in~\eqref{for:cocoer_f} and~\eqref{for:cocoer_l} for simplicity. From the definition of $\vM_1$, we can see that the condition~\eqref{for:cocoer_l} depends on function $l^*$, $\vP$, $\vD$, $\vA$, $\beta$,  $\theta$, $\tau$, and $\sigma$. But it is not as complicated as it looks like. Let's assume that $\vD=\vI$ and $\vP=\vI$, $f$ and $\l^*$ have Lipschitz continuous gradients with $L_f$ and $L_{l^*}$, respectively. The condition~\eqref{for:cocoer_l} requires 
		$$\beta\leq \lambda_{\min}(\vM_1)/L_{l^*}=\tau (1-\theta\tau\sigma\|\vA\vA^\top\|)/(\sigma L_{l^*}).$$
		Therefore, we can also choose a small $\theta\in(3/4,1]$ to make it valid if a larger $\beta$ works. By making $\theta$ small, we can have a large dual stepsize $\sigma$ for a given primal stepsize $\tau$. In fact, we do not need to know $\beta$ explicitly to determine both stepsizes. When we consider both conditions (\eqref{for:cocoer_f} and~\eqref{for:cocoer_l}) and the condition $\tau<2\beta$ in Theorem~\ref{thm:main}, we have 
		\begin{align}\tau L_f< 2,~\sigma L_{l^*}< 2(1-\theta \tau\sigma \lambda_{\max}(\vA\vA^\top)).\label{eq:optimal}\end{align} 
		For comparison, the condition in~\cite{vu2013splitting} is $\max(\tau,\sigma)\max(L_f,L_{l^*})< 2(1-\sqrt{\tau\sigma\|\vA\vA^\top\|})$. Our condition has two benefits. One is that we consider $\tau$ and $\sigma$ differently and can obtain a large stepsize even when the Lipschitz constants $L_f$ and $L_{l^*}$ have different scales. The other is the introduction of $\theta\in(3/4,1]$, which may increase the upper bounds for the stepsizes. The best result in this paper comes from choosing a $\theta$ that is close to $3/4$ even when $\theta=1$ is enough for $\vM_1$ being positive definite. See the example in Section~\ref{sec:optimal_bdd}.		

		\item (Special cases:) The positiveness of $\vM_1$ gives an upper bound for $\tau\sigma$ that depends on $\vP$, $\vD$ and $\vA$.
		The convergence of~\eqref{for:PD3O_nog} requires an upper bound for $\tau$ that is $\tau<2\beta$, see Theorem~\ref{thm:main}. 
		If $\nabla l^*$ is fixed for all $\vs$, e.g., problem~\eqref{for:main_problem_noInf}, then~\eqref{for:cocoer_l} is satisfied with any $\beta> 0$, and the upper bound of $\tau$ depends on $\vP$ and $L_f$ only, i.e., $\tau< 2\lambda_{\min}(\vP)L_f^{-1}$.
		The condition is strictly weaker than that in~\cite{vu2013splitting} and~\cite{boct2015convergence} because of the introduction of $\theta$.
		If $\nabla f$ is fixed for all $\vx$, e.g., the linear $f$ in linearized ALM, then~\eqref{for:cocoer_f} is satisfied with any $\beta> 0$, and the upper bound for $\tau$ depends on $\sigma$, $\vA$, $\vD$, $\vP$, and the Lipschitz constant of $\nabla l^*$ because of $\vM_1$ in~\eqref{for:cocoer_l}, i.e., $\sigma< 2\lambda_{\min}(\vD-(3/4)\tau\sigma\vA\vP^{-1}\vA^\top)L_{l^*}^{-1}$.
	\end{itemize}
\end{remark}

%


\begin{assumption} \label{asp:2} Let $(\vx^\star,\vs^\star)$ be any fixed point of~\eqref{for:PD3O_nog}. There exist $\mu_f\geq 0$, $\mu_h\geq 0$, and $\mu_l\geq 0$, such that, for any $\vx\in\cX$ and $\vs\in\cS$, 
	\begin{align}
	\langle  \vx-\vx^\star,\nabla f(\vx)-\nabla f(\vx^\star)\rangle \geq  &\mu_f\|\vx-\vx^\star\|_\vP^2,		 \label{for:monoto_f}\\
	\langle \vs-\vs^\star,\vp_h(\vs)-\vp_h(\vs^\star)\rangle \geq &\mu_{h}\|\vs-\vs^\star\|^2_{\vM_1},  \label{for:monoto_qh} \\
	\langle \vs-\vs^\star,\nabla l^*(\vs)-\nabla l^*(\vs^\star)\rangle \geq & \mu_{l}\|\vs-\vs^\star\|^2_{\vM_1},	 \label{for:monoto_l} 
	\end{align}
	where $\vp_h(\vs)\in\partial h^*(\vs)$ and $\vp_h(\vs^\star)\in\partial h^*(\vs^\star)$.  
\end{assumption}
The assumption is satisfied if functions $f(\vx)$, $h(\vs)$, and $l(\vs)$ are convex, and in this case, $\mu_f=\mu_{h}=\mu_{l}=0$. We choose the norms $\|\cdot\|_\vP$ and $\|\cdot\|_{\vM_1}$ for the two spaces for simplicity. All the results in this paper also hold for standard norms, but the formulas are complicated. We will need this assumption with positive values to show the linear convergence for strongly convex functions. In this case, because $\vP$ and $\vM_1$ are positive definite, $\mu_f>0$ (or $\mu_h>0$, $\mu_l>0$) is implied from the strong convexity of the function $f(\vx)$ (or $g^*(\vs)$, $l^*(\vs)$).

\subsection{Convergence for general convex functions}\label{sec:convergence_gen}
First of all, we find a subgradient of $h^*$ at $\vs^{k+1}$:
\begin{align}
\vq_h(\vs^{k+1}):=  {1 \over \tau}\vM\vs^k-{1\over\tau}\vM\vs^{k+1}+\vA\vx^{k+1}-\nabla l^*(\vs^k)\in \partial h^*(\vs^{k+1}). \label{def_qh1} 
\end{align}
It can be easily obtained from~\eqref{for:PD3O_nog}, and its proof is omitted here.
Let $(\vx^\star,\vs^\star)$ be any fixed point of~\eqref{for:PD3O_nog}, and we have a subgradient of $h^*$ at $\vs^\star$:
\begin{align}
\vq_h(\vs^\star):=    &  \vA\vx^\star-\nabla l^*(\vs^\star) \in \partial h^*(\vs^\star). \label{def_qh*}
\end{align}

\begin{lemma}[fundamental inequality]\label{lemma:fundamental_ineq}
	Let $(\vx^\star,\vs^\star)$ be any fixed point of~\eqref{for:PD3O_nog}, and $\{(\vx^k,\vs^k)\}$ a sequence generated by~\eqref{for:PD3O_nog}. 
	Then we have 
	\begin{align}
	& \|(\vx^{k+1},\vs^{k+1})-(\vx^\star,\vs^\star)\|_{\vP,\widetilde\vM}^2 \nonumber\\
	\leq	& \|(\vx^k,\vs^k)-(\vx^\star,\vs^\star)\|_{\vP,\widetilde\vM}^2  -  \|\vs^k-\vs^{k+1}\|_{\vM_1}^2 \nonumber\\
	& -2\tau\langle \vs^{k+1}-\vs^\star,\vq_h(\vs^{k+1})-\vq_h(\vs^\star)+\nabla l^*(\vs^k)-\nabla l^*(\vs^\star)\rangle\nonumber\\
	& +2\tau\langle \nabla f(\vx^k)-\nabla f(\vx^\star), \vx^\star-\vx^{k}+(4\theta-3)(\vx^k-\vx^{k+1})\rangle   \nonumber\\
	& -{(4\theta-3)\|\vx^k-\vx^{k+1}\|}_\vP^2+4(1-\theta)\tau^2\|\nabla f(\vx^k) -\nabla f(\vx^\star)\|_{\vP^{-1}}^2.\label{eqn:fundamental_ineq}
	\end{align}
\end{lemma}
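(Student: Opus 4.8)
The plan is to convert \eqref{eqn:fundamental_ineq} into an exact identity and then discard a single nonnegative term. I would expand the two summands of the left-hand side, $\|\vx^{k+1}-\vx^*\|_\vP^2$ and $\|\vs^{k+1}-\vs^*\|_{\widetilde\vM}^2$, separately, feeding in the update relations \eqref{for:PD3O_nog}, the fixed-point conditions, and the subgradient formulas \eqref{def_qh1}--\eqref{def_qh*}, and then reconcile the bookkeeping among the operators $\vM$, $\vM_1$, $\vM_2$, and $\widetilde\vM$.

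For the primal part, I would first rewrite \eqref{for:PD3O_nog_iteration_c} as $\vP(\vx^{k+1}-\vx^k)=-\gamma(\nabla f(\vx^k)+\vA^\top\vs^{k+1})$ and apply the elementary identity $\|a\|_\vP^2-\|b\|_\vP^2=2\langle a-b,a\rangle_\vP-\|a-b\|_\vP^2$ with $a=\vx^{k+1}-\vx^*$ and $b=\vx^k-\vx^*$. This produces a cross term $-2\gamma\langle \nabla f(\vx^k)+\vA^\top\vs^{k+1},\vx^{k+1}-\vx^*\rangle$ together with $-\|\vx^{k+1}-\vx^k\|_\vP^2$. Splitting $\vs^{k+1}$ against $\vs^*$ and invoking the fixed-point relation $\vA^\top\vs^*=-\nabla f(\vx^*)$, read off from \eqref{for:PD3O_nog_iteration_c} at $(\vx^*,\vs^*)$, turns the $\nabla f$ pieces into the coercive difference $\nabla f(\vx^k)-\nabla f(\vx^*)$ and leaves the coupling term $-2\gamma\langle \vs^{k+1}-\vs^*,\vA(\vx^{k+1}-\vx^*)\rangle$.

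For the dual coupling I would use \eqref{def_qh1}--\eqref{def_qh*} to substitute $\vA(\vx^{k+1}-\vx^*)=\vq_h(\vs^{k+1})-\vq_h(\vs^*)-\tfrac{1}{\gamma}\vM(\vs^k-\vs^{k+1})+\nabla l^*(\vs^k)-\nabla l^*(\vs^*)$. The $\vq_h$ and $\nabla l^*$ pieces become exactly the coupling term on the third line of \eqref{eqn:fundamental_ineq}, while the $\vM$-weighted piece $2\langle \vs^{k+1}-\vs^*,\vs^k-\vs^{k+1}\rangle_\vM$ telescopes, via the same elementary identity now taken in the (possibly indefinite but still self-adjoint) $\vM$-inner product, into $\|\vs^k-\vs^*\|_\vM^2-\|\vs^{k+1}-\vs^*\|_\vM^2-\|\vs^k-\vs^{k+1}\|_\vM^2$. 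At this stage one has an exact identity expressed in the $\vM$-norm.

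The last and most delicate step is to pass from the $\vM$-norm to the $\widetilde\vM$-norm and to account for the $\theta$-dependent terms. Using $\widetilde\vM=\vM+2\vM_2$ and $\vM=\vM_1-\vM_2$, I would rewrite the three $\vM$-norms, which generates the term $-\|\vs^k-\vs^{k+1}\|_{\vM_1}^2$ plus leftover $\vM_2$-norms of $\vs^{k+1}-\vs^*$, $\vs^k-\vs^*$, and $\vs^k-\vs^{k+1}$. The crucial observation is that $\vM_2=\gamma^2(1-\theta)\vA\vP^{-1}\vA^\top$, so, using the primal relation $\gamma\vA^\top(\vs^{k+1}-\vs^*)=\vP(\vx^k-\vx^{k+1})-\gamma(\nabla f(\vx^k)-\nabla f(\vx^*))$ derived above, every $\vM_2$-norm of $\vs^{k+1}-\vs^*$ converts into a $\vP^{-1}$-norm of primal quantities, namely $\|\vs^{k+1}-\vs^*\|_{\vM_2}^2=(1-\theta)\|\vP(\vx^k-\vx^{k+1})-\gamma(\nabla f(\vx^k)-\nabla f(\vx^*))\|_{\vP^{-1}}^2$. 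I expect this is where the hard bookkeeping lies: expanding this square recovers precisely the $(4\theta-3)$-weighted inner product, the term $-(4\theta-3)\|\vx^k-\vx^{k+1}\|_\vP^2$, and the term $4(1-\theta)\gamma^2\|\nabla f(\vx^k)-\nabla f(\vx^*)\|_{\vP^{-1}}^2$ on the last two lines of \eqref{eqn:fundamental_ineq} (the splitting $\vx^*-\vx^{k+1}=(\vx^*-\vx^k)+(\vx^k-\vx^{k+1})$ is what converts the cross term I produced into the one stated). The residual $\vM_2$-terms then assemble into the single perfect square $\|(\vs^{k+1}-\vs^*)+(\vs^k-\vs^*)\|_{\vM_2}^2$, which is nonnegative because $\vM_2\succeq 0$; this is exactly where the hypothesis $\theta\le 1$ enters. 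Discarding this square yields the claimed inequality, and in fact the computation shows \eqref{eqn:fundamental_ineq} holds with the gap equal to that square, so the only genuine inequality in the whole argument is the dropping of one nonnegative term.
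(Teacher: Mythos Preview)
Your proposal is correct and follows essentially the same route as the paper: both arguments derive the exact identity~\eqref{eq:nonexpansive_cp_opt} in the $\vM$-norm from the update rule and the subgradient formulas, pass to $\widetilde\vM=\vM+2\vM_2$, and then convert $4\|\vs^{k+1}-\vs^*\|_{\vM_2}^2$ into primal quantities via~\eqref{for:PD3O_nog_iteration_c}. The only difference is cosmetic: where the paper invokes $\|a-b\|_{\vM_2}^2\le 2\|a\|_{\vM_2}^2+2\|b\|_{\vM_2}^2$ in~\eqref{con_middle_term}, you identify the equivalent slack $\|(\vs^{k+1}-\vs^*)+(\vs^k-\vs^*)\|_{\vM_2}^2\ge 0$ explicitly, which has the small virtue of exhibiting the exact gap in~\eqref{eqn:fundamental_ineq}.
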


\begin{proof}
	The definitions of $\vq_h(\vs^{k+1})$ and $\vq_h(\vs^\star)$ in~\eqref{def_qh1} and~\eqref{def_qh*}, respectively, and the update of $\vx^{k+1}$ in~\eqref{for:PD3O_nog_iteration_c} show
	\begin{align}
	& 2\tau\langle \vs^{k+1}-\vs^\star,\vq_h(\vs^{k+1})-\vq_h(\vs^\star)+\nabla l^*(\vs^k)-\nabla l^*(\vs^\star)\rangle \nonumber\\
	\overset{\eqref{def_qh1},\eqref{def_qh*}}{=} &  2\tau\langle \vs^{k+1}-\vs^\star,{1 \over \tau}\vM\vs^k-{1\over\tau}\vM\vs^{k+1}+\vA\vx^{k+1}-\vA\vx^\star\rangle \nonumber\\
	= & 2\langle \vs^{k+1}-\vs^\star,\vs^k-\vs^{k+1}\rangle_\vM +2\tau\langle \vs^{k+1}-\vs^\star,\vA\vx^{k+1}-\vA\vx^\star\rangle \nonumber\\
	= & 2\langle \vs^{k+1}-\vs^\star,\vs^k-\vs^{k+1}\rangle_\vM +2\tau\langle \vA^\top\vs^{k+1}-\vA^\top\vs^\star,\vx^{k+1}-\vx^\star\rangle \nonumber\\
	\overset{\eqref{for:PD3O_nog_iteration_c}}{=} & 2\langle \vs^{k+1}-\vs^\star,\vs^k-\vs^{k+1}\rangle_\vM +2\langle \vx^k-\vx^{k+1},\vx^{k+1}-\vx^\star\rangle_\vP \nonumber\\
	& -2\tau\langle \nabla f(\vx^k)-\nabla f(\vx^\star),\vx^{k+1}-\vx^\star\rangle \label{eqn:fundamental_0}\\
	= & \|\vs^{k}-\vs^\star\|_\vM^2-\|\vs^{k+1}-\vs^\star\|_\vM^2-\|\vs^k-\vs^{k+1}\|_\vM^2 \nonumber\\
	& +\|\vx^k-\vx^\star\|_\vP^2-\|\vx^{k+1}-\vx^\star\|_\vP^2-\|\vx^k-\vx^{k+1}\|_\vP^2 \nonumber\\
	& +2\tau\langle \nabla f(\vx^k)-\nabla f(\vx^\star),\vx^\star-\vx^{k+1}\rangle, \nonumber 
	\end{align}
	where we expanded the first two terms in~\eqref{eqn:fundamental_0} using $2\langle a,b\rangle =\|a+b\|^2-\|a\|^2-\|b\|^2$ to obtain the last equality. 
	Therefore, we have 
	\begin{align}
	& \|(\vx^{k+1},\vs^{k+1})-(\vx^\star,\vs^\star)\|_{\vP,\vM}^2 \nonumber\\
	= & 2\tau\langle \nabla f(\vx^k)-\nabla f(\vx^\star), \vx^\star-\vx^{k+1}\rangle \nonumber\\
	& -2\tau\langle \vs^{k+1}-\vs^\star,\vq_h(\vs^{k+1})-\vq_h(\vs^\star)+\nabla l^*(\vs^k)-\nabla l^*(\vs^\star)\rangle  \nonumber\\
	&  + \|(\vx^k,\vs^k)-(\vx^\star,\vs^\star)\|_{\vP,\vM}^2 - \|\vx^k-\vx^{k+1}\|_\vP^2 -  \|\vs^k-\vs^{k+1}\|_\vM^2. \label{eq:nonexpansive_cp_opt}
	\end{align}
	The fact that $\vM=\vM_1-\vM_2$ gives us an upper bound for the last term of~\eqref{eq:nonexpansive_cp_opt}.
	\begin{align}
	-  \|\vs^k-\vs^{k+1}\|_\vM^2 = & -  \|\vs^k-\vs^{k+1}\|_{\vM_1}^2 +  \|\vs^k-\vs^{k+1}\|_{\vM_2}^2 \nonumber\\
	=	     &   -  \|\vs^k-\vs^{k+1}\|_{\vM_1}^2+ \|\vs^k-\vs^\star+\vs^\star-\vs^{k+1}\|_{\vM_2}^2 \nonumber\\
	\leq   & -  \|\vs^k-\vs^{k+1}\|_{\vM_1}^2+2\|\vs^k-\vs^\star\|_{\vM_2}^2+2\|\vs^{k+1}-\vs^\star\|_{\vM_2}^2.\label{con_middle_term}
	\end{align} 
	Adding $2\|\vs^{k+1}-\vs^\star\|_{\vM_2}^2$ onto both sides of~\eqref{eq:nonexpansive_cp_opt}, recalling that $\widetilde\vM = \vM_1+\vM_2=\vM+2\vM_2$, and combining~\eqref{con_middle_term} and~\eqref{eq:nonexpansive_cp_opt}, we have  
	\begin{align}
	& \|(\vx^{k+1},\vs^{k+1})-(\vx^\star,\vs^\star)\|_{\vP,\widetilde\vM}^2 \nonumber\\
	\leq	& 2\tau\langle \nabla f(\vx^k)-\nabla f(\vx^\star), \vx^\star-\vx^{k+1}\rangle  \nonumber\\
	& -2\tau\langle \vs^{k+1}-\vs^\star,\vq_h(\vs^{k+1})-\vq_h(\vs^\star)+\nabla l^*(\vs^k)-\nabla l^*(\vs^\star)\rangle\nonumber\\
	& + \|(\vx^k,\vs^k)-(\vx^\star,\vs^\star)\|_{\vP,\widetilde\vM}^2 - \|\vx^k-\vx^{k+1}\|_\vP^2 -  \|\vs^k-\vs^{k+1}\|_{\vM_1}^2 \nonumber\\
	& +4\|\vs^{k+1}-\vs^\star\|_{\vM_2}^2. \label{eq:nonexpansive_cp_optt}
	\end{align}
	With the definition of $\vM_2$, the last term in~\eqref{eq:nonexpansive_cp_optt} can be written as
	\begin{align} 
	4\|\vs^{k+1}-\vs^\star\|_{\vM_2}^2 
	= & 4{(1-\theta)\|\tau\vP^{-1}\vA^\top\vs^{k+1}-\tau\vP^{-1}\vA^\top\vs^\star\|}_\vP^2 \nonumber\\
	=	& 4{(1-\theta)\|\vx^k-{\tau}\vP^{-1}\nabla f(\vx^k) -\vx^{k+1}+\tau\vP^{-1}\nabla f(\vx^\star)\|}_\vP^2  \nonumber\\
	= & 4{(1-\theta)\|\vx^k-\vx^{k+1}\|}_\vP^2+4(1-\theta)\tau^2\|\nabla f(\vx^k) -\nabla f(\vx^\star)\|_{\vP^{-1}}^2 \nonumber\\
	&-8(1-\theta)\tau\langle \vx^k-\vx^{k+1},\nabla f(\vx^k)-\nabla f(\vx^\star)\rangle \label{for:from_PD3O_nog_iteration_c},
	\end{align}
	where the second equality comes from~\eqref{for:PD3O_nog_iteration_c}. Then, we plug~\eqref{for:from_PD3O_nog_iteration_c} into~\eqref{eq:nonexpansive_cp_optt} and obtain
	\begin{align}
	& \|(\vx^{k+1},\vs^{k+1})-(\vx^\star,\vs^\star)\|_{\vP,\widetilde\vM}^2 \nonumber\\
	\leq	& 2\tau\langle \nabla f(\vx^k)-\nabla f(\vx^\star), \vx^\star-\vx^{k}+(4\theta-3)(\vx^k-\vx^{k+1})\rangle  \nonumber\\
	& -2\tau\langle \vs^{k+1}-\vs^\star,\vq_h(\vs^{k+1})-\vq_h(\vs^\star)+\nabla l^*(\vs^k)-\nabla l^*(\vs^\star)\rangle\nonumber\\
	& + \|(\vx^k,\vs^k)-(\vx^\star,\vs^\star)\|_{\vP,\widetilde\vM}^2  -  \|\vs^k-\vs^{k+1}\|_{\vM_1}^2 \nonumber\\
	& -{(4\theta-3)\|\vx^k-\vx^{k+1}\|}_\vP^2+4(1-\theta)\tau^2\|\nabla f(\vx^k) -\nabla f(\vx^\star)\|_{\vP^{-1}}^2.\nonumber 
	\end{align}
	The result is proved. \qed
\end{proof}

\begin{lemma}\label{lem:main}
	Let~\eqref{for:cocoer_l} be satisfied, then
	\begin{align*}
	&  -\|\vs^k-\vs^{k+1}\|_{\vM_1}^2 -2\tau\langle\vs^{k+1}-\vs^\star,\nabla l^*(\vs^k)-\nabla l^*(\vs^\star)\rangle \nonumber\\
	\leq & -\left(1-\tau / (2\beta)\right)\|\vs^k-\vs^{k+1}\|_{\vM_1}^2.
	\end{align*}	
\end{lemma}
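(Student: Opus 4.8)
The plan is to reduce the claim to a single scalar inequality and then close it with cocoercivity and Young's inequality. First I would add $\|\vs^k-\vs^{k+1}\|_{\vM_1}^2$ to both sides and divide by the positive constant $2\gamma$; since $-(1-\gamma/(2\beta))\|\vs^k-\vs^{k+1}\|_{\vM_1}^2+\|\vs^k-\vs^{k+1}\|_{\vM_1}^2=\tfrac{\gamma}{2\beta}\|\vs^k-\vs^{k+1}\|_{\vM_1}^2$, the inequality to be proved becomes
\begin{align*}
-\langle \vs^{k+1}-\vs^*,\nabla l^*(\vs^k)-\nabla l^*(\vs^*)\rangle \leq \frac{1}{4\beta}\|\vs^k-\vs^{k+1}\|_{\vM_1}^2.
\end{align*}
This is the only thing that needs to be established.

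Next I would bring in the cocoercivity hypothesis \eqref{for:cocoer_l} evaluated at $\vs=\vs^k$, which reads $\langle \vs^k-\vs^*,\nabla l^*(\vs^k)-\nabla l^*(\vs^*)\rangle \geq \beta\|\nabla l^*(\vs^k)-\nabla l^*(\vs^*)\|_{\vM_1^{-1}}^2$. Writing $\vs^k-\vs^*=(\vs^k-\vs^{k+1})+(\vs^{k+1}-\vs^*)$, splitting the inner product, and isolating the $(\vs^{k+1}-\vs^*)$ piece gives
\begin{align*}
-\langle \vs^{k+1}-\vs^*,\nabla l^*(\vs^k)-\nabla l^*(\vs^*)\rangle \leq \langle \vs^k-\vs^{k+1},\nabla l^*(\vs^k)-\nabla l^*(\vs^*)\rangle - \beta\|\nabla l^*(\vs^k)-\nabla l^*(\vs^*)\|_{\vM_1^{-1}}^2.
\end{align*}

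Finally I would control the cross term. Since $\vM_1$ is positive definite, the weighted Cauchy--Schwarz inequality gives $\langle \vs^k-\vs^{k+1},\nabla l^*(\vs^k)-\nabla l^*(\vs^*)\rangle \leq \|\vs^k-\vs^{k+1}\|_{\vM_1}\,\|\nabla l^*(\vs^k)-\nabla l^*(\vs^*)\|_{\vM_1^{-1}}$, and Young's inequality $ab\leq \tfrac{1}{4\beta}a^2+\beta b^2$ with $a=\|\vs^k-\vs^{k+1}\|_{\vM_1}$ and $b=\|\nabla l^*(\vs^k)-\nabla l^*(\vs^*)\|_{\vM_1^{-1}}$ produces a $\beta b^2$ term that cancels exactly against the cocoercivity term above, leaving precisely $\tfrac{1}{4\beta}\|\vs^k-\vs^{k+1}\|_{\vM_1}^2$, as required.

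The argument has no genuine obstacle; the only point requiring care is the bookkeeping of constants, so that the $2\gamma$ factor and the $\tfrac{1}{4\beta}$ coming from Young's inequality combine to the stated $\gamma/(2\beta)$. It is also worth noting that the dual norm $\|\cdot\|_{\vM_1^{-1}}$ must be the companion of $\|\cdot\|_{\vM_1}$ in both the Cauchy--Schwarz step and in the hypothesis \eqref{for:cocoer_l}; this matching is exactly why the cocoercivity assumption is stated with the $\vM_1^{-1}$ norm rather than the ambient one.
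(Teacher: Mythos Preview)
Your proof is correct and follows essentially the same approach as the paper: both split $\vs^{k+1}-\vs^*$ through $\vs^k$, apply the weighted Cauchy--Schwarz/Young inequality (with the same choice of constants) to the cross term $\langle \vs^k-\vs^{k+1},\nabla l^*(\vs^k)-\nabla l^*(\vs^*)\rangle$, and use cocoercivity~\eqref{for:cocoer_l} so that the $\beta\|\nabla l^*(\vs^k)-\nabla l^*(\vs^*)\|_{\vM_1^{-1}}^2$ terms cancel. The only cosmetic difference is that you first reduce the claim to the scalar inequality before carrying out the estimate, whereas the paper works directly with the full expression.
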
	
\begin{proof}
	Because $\vM_1$ is positive definite, we have
	\begin{align*}
	& -\|\vs^k-\vs^{k+1}\|_{\vM_1}^2 -2\tau\langle\vs^{k+1}-\vs^\star,\nabla l^*(\vs^k)-\nabla l^*(\vs^\star)\rangle \nonumber\\
	=  & -\|\vs^k-\vs^{k+1}\|_{\vM_1}^2 -2\tau\langle\vs^{k+1}-\vs^k,\nabla l^*(\vs^k)-\nabla l^*(\vs^\star)\rangle \nonumber\\
	& -2\tau\langle\vs^k-\vs^\star,\nabla l^*(\vs^k)-\nabla l^*(\vs^\star)\rangle\nonumber\\
	\leq  & -\|\vs^k-\vs^{k+1}\|_{\vM_1}^2 +{\tau\over2\beta}\|\vs^k-\vs^{k+1}\|_{\vM_1}^2+{2\tau\beta}\|\nabla l^*(\vs^k)-\nabla l^*(\vs^\star)\|_{\vM_1^{-1}}^2 \nonumber\\
	& -2\tau\beta\|\nabla l^*(\vs^k)-\nabla l^*(\vs^\star)\|_{\vM_1^{-1}}^2	\\
	=  & -\|\vs^k-\vs^{k+1}\|_{\vM_1}^2 +{\tau\over2\beta}\|\vs^k-\vs^{k+1}\|_{\vM_1}^2,
	\end{align*}
	where the inequality comes from the Cauchy-Schwarz inequality and~\eqref{for:cocoer_l}. \qed
\end{proof}

\begin{theorem}\label{thm:main}	Let Assumption~\ref{asp:1} hold, $\theta\in(3/4,1]$, and $\tau\in(0,2\beta)$. 
	The sequence $\{(\vx^k,\vs^k)\}$ is generated by~\eqref{for:PD3O_nog}. 
	For any fixed point $(\vx^\star,\vs^\star)$ of~\eqref{for:PD3O_nog}, we have
	\begin{align}
	& \|(\vx^{k+1},\vs^{k+1})-(\vx^\star,\vs^\star)\|_{\vP,\widetilde\vM}^2- \|(\vx^k,\vs^k)-(\vx^\star,\vs^\star)\|_{\vP,\widetilde\vM}^2 \nonumber\\
	\leq &  -\left(1-{\tau \over 2\beta}\right)\|\vs^k-\vs^{k+1}\|_{\vM_1}^2 -{(4\theta-3)(2\beta-\tau)\over2\beta-4(1-\theta)\tau }\|\vx^k-\vx^{k+1}\|_\vP^2. \label{eqn:bounded}
	\end{align}
\end{theorem}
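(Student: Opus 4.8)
The plan is to start from the fundamental inequality in Lemma~\ref{lemma:fundamental_ineq} and to eliminate the terms on its right-hand side one family at a time, using the three relations of Assumption~\ref{asp:1} together with Lemma~\ref{lem:main}, until only the two squared-difference terms claimed in~\eqref{eqn:bounded} survive. The dual ($\vs$-variable) terms are the easy part. The cross term involving $\vq_h$ is nonnegative by the monotonicity relation~\eqref{for:cocoer_h}, so $-2\gamma\langle \vs^{k+1}-\vs^*,\vq_h(\vs^{k+1})-\vq_h(\vs^*)\rangle\le 0$ and may simply be discarded from the upper bound. The remaining $\nabla l^*$ contribution, taken together with $-\|\vs^k-\vs^{k+1}\|_{\vM_1}^2$, is precisely the left-hand side of Lemma~\ref{lem:main}, which (invoking~\eqref{for:cocoer_l} and the hypothesis $\gamma<2\beta$) collapses to $-\left(1-\tfrac{\gamma}{2\beta}\right)\|\vs^k-\vs^{k+1}\|_{\vM_1}^2$. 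This already delivers the first term on the right-hand side of~\eqref{eqn:bounded}.

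It then remains to bound the primal ($\vx$-variable) block
\begin{align*}
  & 2\gamma\langle \nabla f(\vx^k)-\nabla f(\vx^*),\, \vx^*-\vx^{k}+(4\theta-3)(\vx^k-\vx^{k+1})\rangle \\
  & \quad -(4\theta-3)\|\vx^k-\vx^{k+1}\|_\vP^2 + 4(1-\theta)\gamma^2\|\nabla f(\vx^k)-\nabla f(\vx^*)\|_{\vP^{-1}}^2,
\end{align*}
which is the heart of the argument. Writing $d:=\nabla f(\vx^k)-\nabla f(\vx^*)$ and $u:=\vx^k-\vx^{k+1}$, the cocoercivity~\eqref{for:cocoer_f} gives $2\gamma\langle d,\vx^*-\vx^k\rangle\le -2\gamma\beta\|d\|_{\vP^{-1}}^2$. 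After this substitution the block becomes a quadratic form in the two scalars $\|d\|_{\vP^{-1}}$ and $\|u\|_\vP$: the $\|d\|_{\vP^{-1}}^2$ coefficient is $-2\gamma[\beta-2(1-\theta)\gamma]$, the cross term $2\gamma(4\theta-3)\langle d,u\rangle$ carries a positive coefficient (since $\theta>3/4$), and there is the term $-(4\theta-3)\|u\|_\vP^2$.

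The key step is to show this quadratic form is dominated by $-\tfrac{(4\theta-3)(2\beta-\gamma)}{2\beta-4(1-\theta)\gamma}\|u\|_\vP^2$. I would bound the cross term by the generalized Cauchy--Schwarz inequality $\langle d,u\rangle\le\|d\|_{\vP^{-1}}\|u\|_\vP$ (the positive coefficient makes this the correct direction) and then complete the square in $\|d\|_{\vP^{-1}}$. The main obstacle, and exactly where the hypotheses $\theta>3/4$ and $\gamma<2\beta$ are genuinely needed, is verifying that the $\|d\|_{\vP^{-1}}^2$ coefficient is strictly negative, i.e. $\beta-2(1-\theta)\gamma>0$: since $4(1-\theta)<1$ one has $2(1-\theta)\gamma<\gamma/2<\beta$, so completing the square is legitimate and the leftover squared term is nonpositive and can be dropped. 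The residual $\|u\|_\vP^2$ coefficient is then $C-B^2/(4A)$ with $A=2\gamma[\beta-2(1-\theta)\gamma]$, $B=2\gamma(4\theta-3)$, and $C=4\theta-3$; a short simplification collapses the numerator to $(4\theta-3)(2\beta-\gamma)$ and the denominator to $2\beta-4(1-\theta)\gamma$, matching~\eqref{eqn:bounded} exactly. Combining the dual and primal bounds completes the proof.
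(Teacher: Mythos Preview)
Your proposal is correct and follows essentially the same route as the paper: both start from Lemma~\ref{lemma:fundamental_ineq}, discard the $\vq_h$ term via~\eqref{for:cocoer_h}, apply Lemma~\ref{lem:main} for the $\nabla l^*$ contribution, and then handle the primal block by combining cocoercivity~\eqref{for:cocoer_f} with a Cauchy--Schwarz/Young step. The only cosmetic difference is that the paper applies Young's inequality to the cross term with the parameter $2\gamma\beta-4(1-\theta)\gamma^2$ chosen so that the $\|\nabla f(\vx^k)-\nabla f(\vx^*)\|_{\vP^{-1}}^2$ terms cancel exactly, whereas you complete the square in $\|d\|_{\vP^{-1}}$ and drop the nonpositive square---these are the same computation, and your residual $C-B^2/(4A)$ indeed simplifies to the claimed coefficient.
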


\begin{proof}
	Applying Lemma~\ref{lem:main} and $h$ being convex to the inequality~\eqref{eqn:fundamental_ineq} in Lemma~\ref{lemma:fundamental_ineq} gives 
	\begin{align}
	& \|(\vx^{k+1},\vs^{k+1})-(\vx^\star,\vs^\star)\|_{\vP,\widetilde\vM}^2 \nonumber\\
	\leq&  \|(\vx^k,\vs^k)-(\vx^\star,\vs^\star)\|_{\vP,\widetilde\vM}^2-\left(1-\tau /(2\beta)\right)\|\vs^k-\vs^{k+1}\|_{\vM_1}^2  \nonumber\\
	& + \underbrace{2\tau\langle \nabla f(\vx^k)-\nabla f(\vx^\star), \vx^\star-\vx^{k}\rangle}_{A} + 4(1-\theta)\tau^2\|\nabla f(\vx^k) -\nabla f(\vx^\star)\|_{\vP^{-1}}^2  \nonumber\\
	&-{(4\theta-3)\|\vx^k-\vx^{k+1}\|}_\vP^2+\underbrace{2\tau(4\theta-3)\langle \nabla f(\vx^k)-\nabla f(\vx^\star) ,\vx^k-\vx^{k+1}\rangle}_{B}. \label{proof_thm1}
	\end{align}
	Next we bound terms A and B. For term A, the assumption~\eqref{for:cocoer_f} implies
	\begin{align}
	2\tau\langle \nabla f(\vx^k)-\nabla f(\vx^\star), \vx^\star-\vx^{k}\rangle \leq -2\tau\beta \|\nabla f(\vx^k) -\nabla f(\vx^\star)\|_{\vP^{-1}}^2, \label{proof_termA}
	\end{align}
	and the Cauchy-Schwarz inequality applied to term B implies 
	\begin{align}
	& 2\tau(4\theta-3)\langle \nabla f(\vx^k)-\nabla f(\vx^\star) ,\vx^k-\vx^{k+1}\rangle \nonumber\\
	\leq & (2\tau\beta-4(1-\theta)\tau^2)\|\nabla f(\vx^k)-\nabla f(\vx^\star)\|_{\vP^{-1}}^2 \nonumber\\
	&  + {\tau(4\theta-3)^2\over 2\beta-4(1-\theta)\tau}\|\vx^k-\vx^{k+1}\|_\vP^2, \label{proof_termB}
	\end{align}
	when $\theta\in(3/4,1]$ and $\tau\in(0,2\beta)$. The inequality holds because $2\beta-4(1-\theta)\tau>0$, owing to the bounds on $\tau$ and $\theta$.
	Plugging~\eqref{proof_termA} and~\eqref{proof_termB} into~\eqref{proof_thm1}, we have
	\begin{align*}
	& \|(\vx^{k+1},\vs^{k+1})-(\vx^\star,\vs^\star)\|_{\vP,\widetilde\vM}^2 \nonumber\\
	\leq&  \|(\vx^k,\vs^k)-(\vx^\star,\vs^\star)\|_{\vP,\widetilde\vM}^2-\left(1-\tau / (2\beta)\right)\|\vs^k-\vs^{k+1}\|_{\vM_1}^2  \nonumber\\
	&  -{(4\theta-3)\|\vx^k-\vx^{k+1}\|}_\vP^2  + {\tau(4\theta-3)^2\over 2\beta-4(1-\theta)\tau}\|\vx^k-\vx^{k+1}\|_\vP^2 \nonumber\\
	= &  \|(\vx^k,\vs^k)-(\vx^\star,\vs^\star)\|_{\vP,\widetilde\vM}^2 -\left(1-\tau /(2\beta)\right)\|\vs^k-\vs^{k+1}\|_{\vM_1}^2 \nonumber\\
	&  -{(4\theta-3)(2\beta-\tau)\over2\beta-4(1-\theta)\tau }\|\vx^k-\vx^{k+1}\|_\vP^2. 
	\end{align*}
	The inequality~\eqref{eqn:bounded} is proved. \qed
\end{proof}

\begin{remark}\label{remark:PIDP_ALM}
	When $\beta=+\infty$, i.e., the Lipschitz constant of $\nabla f(\vx)$ and $\nabla l^*(\vs)$ is 0, then~\eqref{eqn:bounded} becomes
	\begin{align*}
	& \|(\vx^{k+1},\vs^{k+1})-(\vx^\star,\vs^\star)\|_{\vP,\widetilde\vM}^2- \|(\vx^k,\vs^k)-(\vx^\star,\vs^\star)\|_{\vP,\widetilde\vM}^2 \nonumber\\
	\leq  & -\|\vs^k-\vs^{k+1}\|_{\vM_1}^2 -(4\theta-3)\|\vx^k-\vx^{k+1}\|_\vP^2. 
	\end{align*}
	This is the key result in~\cite[Theorem 3.1]{he2016positive} for linearized ALM. 
	In~\cite{he2016positive}, the authors also considered the case with a general dual stepsize. 
\end{remark}

\begin{remark}[Large stepsizes] 
	We let $\vP=\vI$ and $\vD=\vI$ for simplicity. Consider the problem~\eqref{for:main_problem_noInf} without function $l$. We have $\beta=1/L$, where $L$ is the Lipschitz constant of $\nabla f$. Then we can choose $\tau<2/L$, and $\tau\sigma\leq 4/(3\|\vA\vA^\top\|)$. 
	
	However, for the problem~\eqref{for:main_problem} with function $l$, the choice of the primal stepsize $\tau$ also depends on $\sigma$ because of the operator $\vM_1$ in the assumption~\eqref{for:cocoer_l}. For this case, how to choose $\tau$ and $\sigma$ is complicated. From Remark~\ref{remark1}, if $f$ and $l^*$ have Lipschitz continuous gradients with constants $L_f$ and $L_{l^*}$, respectively, a sufficient condition for convergence is $\tau L_f< 2$ and $\sigma L_{l^*}< 2(1-(3/4) \tau\sigma \|\vA\vA^\top\|)$. Except the same conditions $\tau<2/L$ and  $\tau\sigma\leq 4/(3\|\vA\vA^\top\|)$, there is an additional condition $\sigma< 2(1-(3/4) \tau\sigma \|\vA\vA^\top\|)/ L_{l^*}$.


	%
	%
\end{remark}

\begin{theorem}\label{thm:main2}
	Under the assumptions in Theorem \ref{thm:main}, the sequence $\{(\vx^k,\vs^k)\}$  converges weakly to a fixed point of~\eqref{for:PD3O_nog}.  
	If the iteration~\eqref{for:PD3O_nog} is demicompact at $\vzero$~\cite{petryshyn1966construction}\footnote{ An operator $\vT$ is demicompact at $\vx\in\cH$ if for every bounded sequence $\{\vx^k\}_{k\geq0}$ in $\cH$ such that $T\vx^k - \vx^k\rightarrow \vx$, there exists a strongly convergent subsequence.}, the sequence converges strongly.
\end{theorem}

\begin{proof}
Theorem~\ref{thm:main} shows that the sequence $\{(\vx^k,\vs^k)\}$ is bounded, so weakly convergent subsequences of $\{(\vx^k,\vs^k)\}$ exist. For any weakly convergent subsequence such that $(\vx^{k_i},\vs^{k_i})\rightharpoonup (\vx,\vs)$, the inequality~\eqref{eqn:bounded} gives $(\vx^{k_i-1}-\vx^{k_i},\vs^{{k_i}-1}-\vs^{k_i})\rightarrow 0$. Then based on the iteration~\eqref{for:PD3O_nog}, we obtain~\cite[Fact 1.37]{bauschke2011convex} 
	\begin{align*}
	\nabla f(\vx^{k_i})+\vA^\top\vs^{k_i} = 	{1\over \tau}\vP(\vx^{k_i-1}-\vx^{k_i}) +\nabla f(\vx^{k_i})-\nabla f(\vx^{k_i-1})\rightarrow \vzero,\\
	-\vA\vx^{k_i}+\vq_h(\vs^{k_i}) + \nabla l^*(\vs^{k_i}) = {1 \over \tau}\vM(\vs^{k_i-1}-\vs^{k_i})-\nabla l^*(\vs^{k_i-1})+ \nabla l^*(\vs^{k_i}) \rightarrow \vzero.
	\end{align*}
	Because $f$, $h^*$, and $l^*$ are convex, the operator 
	\begin{align*}
	 \begin{bmatrix}
	\nabla f  & \vA^\top \\-\vA & \partial h^* +\nabla l^*
	\end{bmatrix}
\end{align*}
is maximal monotone. 
	Thus, $(\vx,\vs)$ is a fixed point of~\eqref{for:PD3O_nog} because of~\cite[Proposition 20.33(ii)]{bauschke2011convex}.  

	The inequality~\eqref{eqn:bounded} also shows that the sequence $\{(\vx^k,\vs^k)\}$ is Fej\'er monotone with respect to the set of fixed points of~\eqref{for:PD3O_nog}. Then~\cite[Theorem 5.5]{bauschke2011convex} shows that $\{(\vx^{k},\vs^{k})\}$ converges weakly to a fixed point of~\eqref{for:PD3O_nog}. 
	
	The inequality~\eqref{eqn:bounded} shows that $\{(\vx^k,\vs^k)\}$ is a bounded sequence and $(\vx^{k+1}-\vx^k,\vs^{k+1}-\vs^k)\rightarrow 0$. Then the demicompactness of the iteration in~\eqref{for:PD3O_nog} at $\vzero$ shows that there is a strongly convergent subsequence $(\vx^{k_n},\vs^{k_n})\rightarrow (\bar\vx^\star,\bar\vs^\star)$, and $(\bar\vx^\star,\bar\vs^\star)$ is a fixed point of~\eqref{for:PD3O_nog} because this subsequence is also weakly convergent. Then the inequality \eqref{eqn:bounded} shows that the whole sequence $\{(\vx^k,\vs^k)\}$ converges to the fixed point $(\bar\vx^\star,\bar\vs^\star)$.
	\qed
\end{proof}

\begin{remark} When $\cX$ and $\cS$ are finite dimensional, the sequence $\{(\vx^k,\vs^k)\}$  converges strongly to a fixed point of~\eqref{for:PD3O_nog}.
\end{remark}

In Theorem~\ref{thm:main2}, we showed the convergence of this primal-dual algorithm without providing the convergence rate. The ergodic sublinear convergence rate is showed for primal-dual algorithms for more general problems~\cite{chambolle2016ergodic,yan2017three}.

\subsection{Linear convergence}\label{sec:linear_conv}
In this subsection,  we prove the linear convergence of the sequence $\{(\vx^k,\vs^k)\}$ in Theorem~\ref{thm:main_linear} under the additional Assumption~\ref{asp:2}. 

Before showing the linear convergence, we prove the following lemma, which provides a different upper bound for the same object in Lemma~\ref{lem:main}.
\begin{lemma}\label{lem:linear_main}
	Let~\eqref{for:cocoer_l} and~\eqref{for:monoto_l} be satisfied, then
	\begin{align}\label{eqn:linear_main}
	& -\|\vs^k-\vs^{k+1}\|_{\vM_1}^2 -2\tau\langle\vs^{k+1}-\vs^\star,\nabla l^*(\vs^k)-\nabla l^*(\vs^\star)\rangle \nonumber\\
	\leq  &-\|\vM_2(\vs^{k+1}-\vs^k)+\tau\vA\vx^{k+1}-\tau\vA\vx^\star-\tau\vq_h(\vs^{k+1})+\tau\vq_h(\vs^\star)\|_{\vM_1^{-1}}^2\\
	&		 -\left(2\tau-\tau^2/\beta\right)\mu_{l}\|\vs^k-\vs^{*}\|_{\vM_1}^2\nonumber.
	\end{align}	
\end{lemma}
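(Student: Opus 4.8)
The plan is to reduce the claimed inequality to a single scalar comparison in which the hypotheses~\eqref{for:cocoer_l} and~\eqref{for:monoto_l} can be applied directly, after first recognizing that the complicated vector inside the $\vM_1^{-1}$-norm on the right collapses to a simple expression. First I would substitute the definitions~\eqref{def_qh1} and~\eqref{def_qh*}, which give
\[
\gamma\vq_h(\vs^{k+1})-\gamma\vq_h(\vs^*)=\vM(\vs^k-\vs^{k+1})+\gamma\vA(\vx^{k+1}-\vx^*)-\gamma\left(\nabla l^*(\vs^k)-\nabla l^*(\vs^*)\right),
\]
so that the argument of the norm becomes $(\vM_2+\vM)(\vs^{k+1}-\vs^k)+\gamma\left(\nabla l^*(\vs^k)-\nabla l^*(\vs^*)\right)$. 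Since $\vM=\vM_1-\vM_2$, the operator coefficient simplifies to $\vM_1$, so the argument is exactly $\vM_1(\vs^{k+1}-\vs^k)+\gamma\left(\nabla l^*(\vs^k)-\nabla l^*(\vs^*)\right)$. This single identity is what makes the statement tractable, and keeping the $\vM_2$/$\vM$/$\vM_1$ bookkeeping and the signs straight is the step most prone to error.

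Next I would expand this norm. Writing $v:=\nabla l^*(\vs^k)-\nabla l^*(\vs^*)$ and using the self-adjointness of $\vM_1$,
\[
\left\|\vM_1(\vs^{k+1}-\vs^k)+\gamma v\right\|_{\vM_1^{-1}}^2=\|\vs^k-\vs^{k+1}\|_{\vM_1}^2-2\gamma\langle\vs^k-\vs^{k+1},v\rangle+\gamma^2\|v\|_{\vM_1^{-1}}^2.
\]
On the left-hand side I would split $\vs^{k+1}-\vs^*=(\vs^{k+1}-\vs^k)+(\vs^k-\vs^*)$ inside the inner product, obtaining $-2\gamma\langle\vs^{k+1}-\vs^*,v\rangle=2\gamma\langle\vs^k-\vs^{k+1},v\rangle-2\gamma\langle\vs^k-\vs^*,v\rangle$. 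After substituting both expressions, the terms $-\|\vs^k-\vs^{k+1}\|_{\vM_1}^2$ and $2\gamma\langle\vs^k-\vs^{k+1},v\rangle$ cancel from the two sides, and the desired inequality reduces to the scalar statement
\[
\gamma^2\|v\|_{\vM_1^{-1}}^2+(2\gamma-\gamma^2/\beta)\,\tau_l\|\vs^k-\vs^*\|_{\vM_1}^2\leq 2\gamma\langle\vs^k-\vs^*,v\rangle.
\]

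Finally I would close this scalar inequality using the two structural hypotheses applied at $(\vs^k,\vs^*)$. By~\eqref{for:cocoer_l} we have $\|v\|_{\vM_1^{-1}}^2\leq\frac1\beta\langle\vs^k-\vs^*,v\rangle$, and by~\eqref{for:monoto_l} we have $\tau_l\|\vs^k-\vs^*\|_{\vM_1}^2\leq\langle\vs^k-\vs^*,v\rangle$. Multiplying the first bound by $\gamma^2$ and the second by the coefficient $2\gamma-\gamma^2/\beta$, then adding, produces $\left(\gamma^2/\beta+2\gamma-\gamma^2/\beta\right)\langle\vs^k-\vs^*,v\rangle=2\gamma\langle\vs^k-\vs^*,v\rangle$ on the right, which is exactly what is needed. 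The only point requiring care is verifying that $2\gamma-\gamma^2/\beta\geq 0$, so that the monotonicity bound may be multiplied through without reversing the inequality; this holds precisely when $\gamma\leq 2\beta$, which is the regime already assumed in Theorem~\ref{thm:main}. Everything else is a direct substitution, so I expect the algebraic simplification in the first step to be the main obstacle rather than the final estimate.
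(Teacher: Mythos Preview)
Your proposal is correct and follows essentially the same route as the paper: both arguments hinge on the identity $\vM_2(\vs^{k+1}-\vs^k)+\gamma\vA(\vx^{k+1}-\vx^*)-\gamma(\vq_h(\vs^{k+1})-\vq_h(\vs^*))=\vM_1(\vs^{k+1}-\vs^k)+\gamma(\nabla l^*(\vs^k)-\nabla l^*(\vs^*))$ obtained from~\eqref{def_qh1}--\eqref{def_qh*} and $\vM=\vM_1-\vM_2$, then complete the square and split $2\gamma\langle\vs^k-\vs^*,v\rangle$ into the portions $\gamma^2/\beta$ and $2\gamma-\gamma^2/\beta$ to apply~\eqref{for:cocoer_l} and~\eqref{for:monoto_l} respectively. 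The only cosmetic difference is that the paper transforms the left-hand side forward into the right-hand side, whereas you first reduce the claim to the scalar inequality and then verify it; your observation that $2\gamma-\gamma^2/\beta\geq 0$ is tacitly needed is accurate and matches the paper's implicit use of $\gamma<2\beta$.
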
	
\begin{proof}
Because $\vM_1$ is positive definite, we have
\begin{align}
& -\|\vs^k-\vs^{k+1}\|_{\vM_1}^2 -2\tau\langle\vs^{k+1}-\vs^\star,\nabla l^*(\vs^k)-\nabla l^*(\vs^\star)\rangle \nonumber\\
=  & -\|\vs^k-\vs^{k+1}\|_{\vM_1}^2 -2\tau\langle\vM_1^{1/2}(\vs^{k+1}-\vs^k),\vM_1^{-1/2}(\nabla l^*(\vs^k)-\nabla l^*(\vs^\star))\rangle\nonumber\\
& -2\tau\langle\vs^k-\vs^\star,\nabla l^*(\vs^k)-\nabla l^*(\vs^\star)\rangle\nonumber\\
=  & -\|\vM_1^{1/2}(\vs^{k+1}-\vs^k)+\vM_1^{-1/2}\tau(\nabla l^*(\vs^k)-\nabla l^*(\vs^\star))\|^2\nonumber\\
& +\tau^2\|\nabla l^*(\vs^k)-\nabla l^*(\vs^\star)\|_{\vM_1^{-1}}^2-2\tau\langle\vs^k-\vs^\star,\nabla l^*(\vs^k)-\nabla l^*(\vs^\star)\rangle. \label{lemma3:a}
\end{align}
The first term on the right-hand side of~\eqref{lemma3:a} becomes
\begin{align}
&  -\|\vM_1^{1/2}(\vs^{k+1}-\vs^k)+\vM_1^{-1/2}\tau(\nabla l^*(\vs^k)-\nabla l^*(\vs^\star))\|^2 \nonumber\\
= &  -\|\vM_1(\vs^{k+1}-\vs^k)+\tau(\nabla l^*(\vs^k)-\nabla l^*(\vs^\star))\|_{\vM_1^{-1}}^2 \nonumber\\
= &  -\|\vM_2(\vs^{k+1}-\vs^k)+\vM(\vs^{k+1}-\vs^k)+\tau(\nabla l^*(\vs^k)-\nabla l^*(\vs^\star))\|_{\vM_1^{-1}}^2\nonumber\\
\overset{\eqref{def_qh1},\eqref{def_qh*}}{=} &  -\|\vM_2(\vs^{k+1}-\vs^k)+\tau\vA\vx^{k+1}-\tau\vA\vx^\star-\tau\vq_h(\vs^{k+1})+\tau\vq_h(\vs^\star)\|_{\vM_1^{-1}}^2, \nonumber
\end{align}
where the second equality comes from $\vM=\vM_1-\vM_2$.

For the other two terms on the right-hand side of~\eqref{lemma3:a}, we have
\begin{align}
& \tau^2\|\nabla l^*(\vs^k)-\nabla l^*(\vs^\star)\|_{\vM_1^{-1}}^2-2\tau\langle\vs^k-\vs^\star,\nabla l^*(\vs^k)-\nabla l^*(\vs^\star)\rangle  \nonumber\\
\overset{\eqref{for:cocoer_l},\eqref{for:monoto_l}}{\leq} &-(2\tau-\tau^2/\beta)\mu_l\|\vs^k-\vs^\star\|_{\vM_1}^2. \nonumber
\end{align}
Combining both inequalities together with~\eqref{lemma3:a} gives~\eqref{eqn:linear_main}.	\qed
\end{proof}

\begin{theorem} \label{thm:main_linear}
	Let $(\vx^\star,\vs^\star)$ be a fixed point  of~\eqref{for:PD3O_nog} and Assumptions~\ref{asp:1} and~\ref{asp:2} hold. 
	Define $\widehat\vM   :=  (1+ 2\tau\mu_h)\vM_1+\vM_2$, and we have
	\begin{align}
	\|(\vx^{k+1},\vs^{k+1})-(\vx^\star,\vs^\star)\|_{\vP,\widehat\vM}^2 
	\leq	 
	\rho_1\|(\vx^k,\vs^k)-(\vx^\star,\vs^\star)\|_{\vP,\widehat\vM}^2, \label{eqn:linear1}\end{align}
	where
	\begin{align*}
	\rho_1 = \max\left( {{1- (2\tau-\tau^{2}/\beta)\mu_{l}+C_1} \over {1 +2\tau\mu_h}+C_1},  1- (2\tau-\tau^{2}/\beta)\mu_{f}\right).
	\end{align*}
	Here $C_1\equiv\|\vM_1^{-1/2}\vM_2\vM_1^{-1/2}\|\geq 0$.
	The sequence $\{(\vx^k,\vs^k)\}$ converges linearly to the fixed point $(\vx^\star,\vs^\star)$ with rate $\rho_1<1$ if $\tau\in(0,2\beta)$, $\mu_h+\mu_l>0$, and $\mu_f>0$.
\end{theorem}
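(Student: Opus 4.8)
The plan is to run the energy-inequality machinery once more, starting from the fundamental inequality \eqref{eqn:fundamental_ineq} of Lemma~\ref{lemma:fundamental_ineq}, but feeding each block of terms into the \emph{strong} estimates of Assumption~\ref{asp:2} rather than the plain ones used in Theorem~\ref{thm:main}. First I would split the bracket $\vq_h(\vs^{k+1})-\vq_h(\vs^*)+\nabla l^*(\vs^k)-\nabla l^*(\vs^*)$ into its $\vq_h$-part and its $\nabla l^*$-part, and then organize the right-hand side of \eqref{eqn:fundamental_ineq} into three groups: the $\nabla l^*$ group together with $-\|\vs^k-\vs^{k+1}\|_{\vM_1}^2$; the $\vq_h$ group; and the $\nabla f$ group together with the two remaining $\vx$-difference terms. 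The target is a one-step estimate bounding the $(\vP,\widehat\vM)$-norm at $k+1$ by the $(\vP,\widetilde\vM)$-norm at $k$ minus two genuinely negative decay terms, one in $\|\vx^k-\vx^*\|_\vP^2$ and one in $\|\vs^k-\vs^*\|_{\vM_1}^2$.

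For the $\nabla l^*$ group I would invoke Lemma~\ref{lem:linear_main} and discard its leading negative square (which is $\le 0$), retaining only $-(2\gamma-\gamma^2/\beta)\tau_l\|\vs^k-\vs^*\|_{\vM_1}^2$. For the $\vq_h$ group, \eqref{for:monoto_qh} evaluated at $\vs^{k+1}$ gives $-2\gamma\tau_h\|\vs^{k+1}-\vs^*\|_{\vM_1}^2$; crucially this sits at index $k+1$, so moving it to the left-hand side upgrades $\widetilde\vM=\vM_1+\vM_2$ into exactly $\widehat\vM=(1+2\gamma\tau_h)\vM_1+\vM_2$. The $\nabla f$ group is the inline analogue of Lemma~\ref{lem:linear_main}: I would split $-2\gamma\langle\nabla f(\vx^k)-\nabla f(\vx^*),\vx^k-\vx^*\rangle$ into a $(2\gamma-\gamma^2/\beta)$-part handled by the strong-monotonicity bound \eqref{for:monoto_f} (yielding $-(2\gamma-\gamma^2/\beta)\tau_f\|\vx^k-\vx^*\|_\vP^2$) and a $(\gamma^2/\beta)$-part handled by the cocoercivity bound \eqref{for:cocoer_f} (yielding $-\gamma^2\|\nabla f(\vx^k)-\nabla f(\vx^*)\|_{\vP^{-1}}^2$). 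The leftover $\nabla f$ terms then merge with $2\gamma(4\theta-3)\langle\nabla f(\vx^k)-\nabla f(\vx^*),\vx^k-\vx^{k+1}\rangle-(4\theta-3)\|\vx^k-\vx^{k+1}\|_\vP^2$ into $-(4\theta-3)\|\gamma\vP^{-1/2}(\nabla f(\vx^k)-\nabla f(\vx^*))-\vP^{1/2}(\vx^k-\vx^{k+1})\|^2\le 0$, using $4\theta-3>0$; this square is discarded.

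Assembling the three groups gives $\|(\vx^{k+1},\vs^{k+1})-(\vx^*,\vs^*)\|_{\vP,\widehat\vM}^2\le\|(\vx^k,\vs^k)-(\vx^*,\vs^*)\|_{\vP,\widetilde\vM}^2-(2\gamma-\gamma^2/\beta)\tau_f\|\vx^k-\vx^*\|_\vP^2-(2\gamma-\gamma^2/\beta)\tau_l\|\vs^k-\vs^*\|_{\vM_1}^2$. It remains to absorb the right-hand side into $\rho_1\|(\vx^k,\vs^k)-(\vx^*,\vs^*)\|_{\vP,\widehat\vM}^2$. Splitting into $\vx$- and $\vs$-parts, the $\vx$-part needs $1-(2\gamma-\gamma^2/\beta)\tau_f\le\rho_1$, which is the second argument of the max. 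For the $\vs$-part, writing $u=\|\vs^k-\vs^*\|_{\vM_1}^2$ and $v=\|\vs^k-\vs^*\|_{\vM_2}^2$, I must bound $(1-(2\gamma-\gamma^2/\beta)\tau_l)u+v$ by $\rho_1[(1+2\gamma\tau_h)u+v]$. Since $0\le v\le C_1u$ by the definition of $C_1$, and $(au+v)/(bu+v)$ is nondecreasing in $v$ whenever $a\le b$, the ratio is maximized at $v=C_1u$, which produces exactly the first argument of the max, $\frac{1-(2\gamma-\gamma^2/\beta)\tau_l+C_1}{1+2\gamma\tau_h+C_1}$. Finally $\widehat\vM$ is positive definite, so \eqref{eqn:linear1} with $\rho_1<1$ iterates to geometric decay in the $(\vP,\widehat\vM)$-norm and hence convergence to $(\vx^*,\vs^*)$; the inequalities $\rho_1<1$ reduce to $(2\gamma-\gamma^2/\beta)\tau_f>0$ and $2\gamma\tau_h+(2\gamma-\gamma^2/\beta)\tau_l>0$, which hold precisely when $\gamma\in(0,2\beta)$, $\tau_f>0$, and $\tau_h+\tau_l>0$.

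The main obstacle is the final reduction. The two decay terms only touch the $\vx$-block and the $\vM_1$-component of the $\vs$-block, while the strong-monotonicity gain from $\vq_h$ that builds $\widehat\vM$ lives at index $k+1$; meanwhile the $\widehat\vM$-norm also carries the $\vM_2$-component, so one cannot simply compare the two quadratic forms componentwise. The monotone-ratio argument, together with the spectral constant $C_1=\lambda_{\max}(\vM_1^{-1/2}\vM_2\vM_1^{-1/2})$, is exactly what converts this mismatched $\vM_1/\vM_2$ weighting into the clean factor $\rho_1$. The $\nabla f$ square-completion is routine once one recognizes it mirrors Lemma~\ref{lem:linear_main}, but the bookkeeping of which terms sit at $k$ versus $k+1$ must be tracked with care, since it is precisely that distinction that forces $\widehat\vM$ (rather than $\widetilde\vM$) onto the left-hand side.
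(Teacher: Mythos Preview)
Your proposal is correct and follows essentially the same route as the paper's proof: apply Lemma~\ref{lem:linear_main} to the fundamental inequality, complete the square in the $\nabla f$ terms to get $-(4\theta-3)\|\vx^k-\vx^{k+1}-\gamma\vP^{-1}(\nabla f(\vx^k)-\nabla f(\vx^*))\|_\vP^2$, split the $\vx^k-\vx^*$ inner product via \eqref{for:cocoer_f} and \eqref{for:monoto_f}, move the $-2\gamma\tau_h\|\vs^{k+1}-\vs^*\|_{\vM_1}^2$ term to the left to form $\widehat\vM$, and finish with the spectral comparison governed by $C_1$. The only cosmetic difference is that the paper carries out the last step as an operator inequality on $\vM_1^{1/2}(\vs^k-\vs^*)$ rather than your scalar monotone-ratio argument on $(u,v)$; these are equivalent.
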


\begin{proof}
	Applying Lemma~\ref{lem:linear_main} to~\eqref{eqn:fundamental_ineq} in Lemma~\ref{lemma:fundamental_ineq} gives
	\begin{align}
	& \|(\vx^{k+1},\vs^{k+1})-(\vx^\star,\vs^\star)\|_{\vP,\widetilde\vM}^2 \nonumber\\
	\leq	& \|(\vx^k,\vs^k)-(\vx^\star,\vs^\star)\|_{\vP,\widetilde\vM}^2  -  \left(2\tau-\tau^2/ \beta\right)\mu_{l}\|\vs^k-\vs^{*}\|_{\vM_1}^2 \nonumber\\
	& -2\tau\langle \vs^{k+1}-\vs^\star,\vq_h(\vs^{k+1})-\vq_h(\vs^\star)\rangle\nonumber\\
	& +2\tau\langle \nabla f(\vx^k)-\nabla f(\vx^\star), \vx^\star-\vx^{k}+(4\theta-3)(\vx^k-\vx^{k+1})\rangle   \nonumber\\
	& -{(4\theta-3)\|\vx^k-\vx^{k+1}\|}_\vP^2+4(1-\theta)\tau^2\|\nabla f(\vx^k) -\nabla f(\vx^\star)\|_{\vP^{-1}}^2\nonumber\\
	= & \|(\vx^k,\vs^k)-(\vx^\star,\vs^\star)\|_{\vP,\widetilde\vM}^2  -  \left(2\tau-\tau^2/ \beta\right)\mu_{l}\|\vs^k-\vs^{*}\|_{\vM_1}^2 \nonumber\\
	& -2\tau\langle \vs^{k+1}-\vs^\star,\vq_h(\vs^{k+1})-\vq_h(\vs^\star)\rangle\nonumber\\
	& -2\tau\langle \nabla f(\vx^k)-\nabla f(\vx^\star), \vx^k-\vx^{*}\rangle  +\tau^2\|\nabla f(\vx^k) -\nabla f(\vx^\star)\|_{\vP^{-1}}^2	\nonumber\\
	& -(4\theta-3)\|\vx^k-\vx^{k+1}-\tau\vP^{-1}(\nabla f(\vx^k)-\nabla f(\vx^\star))\|^2_{\vP}. \nonumber
	\end{align}
	Note that
	\begin{align*}
	& -2\tau\langle \nabla f(\vx^k)-\nabla f(\vx^\star), \vx^k-\vx^{*}\rangle  +\tau^2\|\nabla f(\vx^k) -\nabla f(\vx^\star)\|_{\vP^{-1}}^2	\\
	\overset{\eqref{for:cocoer_f}}{\leq} & -(2\tau-\tau^2/\beta)\langle \nabla f(\vx^k)-\nabla f(\vx^\star), \vx^k-\vx^{*}\rangle \\
	\overset{\eqref{for:monoto_f}}{\leq} & -(2\tau-\tau^2/\beta)\mu_f\|\vx^k-\vx^{*}\|_\vP^2.
	\end{align*}
	Then we have, together with~\eqref{for:monoto_qh},
	\begin{align*}
	& \|(\vx^{k+1},\vs^{k+1})-(\vx^\star,\vs^\star)\|_{\vP,\widetilde\vM}^2 \nonumber\\
	\leq & \|(\vx^k,\vs^k)-(\vx^\star,\vs^\star)\|_{\vP,\widetilde\vM}^2  -  \left(2\tau-\tau^2/ \beta\right)\mu_{l}\|\vs^k-\vs^{*}\|_{\vM_1}^2 \nonumber\\
	& -2\tau\mu_h\|\vs^{k+1}-\vs^\star\|^2_{\vM_1} -(2\tau-\tau^2/\beta)\mu_f\|\vx^k-\vx^{*}\|_\vP^2.
	\end{align*}
	That is 
	\begin{align}
	& \|\vx^{k+1}-\vx^\star\|_{\vP}^2 + \|\vs^{k+1}-\vs^\star\|_{(1+2\tau\mu_h)\vM_1+\vM_2}^2 \nonumber\\
	\leq & (1-(2\tau-\tau^2/\beta)\mu_f)\|\vx^k-\vx^\star\|_{\vP}^2 +\|\vs^k-\vs^\star\|_{(1-  \left(2\tau-\tau^2/\beta\right)\mu_{l})\vM_1+\vM_2}^2. \label{eqn:thm3_t1}
	\end{align}
	For the last term on the right hand of~\eqref{eqn:thm3_t1}, we have 
	\begin{align*}
	& \|\vs^k-\vs^\star\|_{(1-  \left(2\tau-\tau^2/\beta\right)\mu_{l})\vM_1+\vM_2}^2 \nonumber\\
	=    & \|\vM_1^{1/2}(\vs^k-\vs^\star)\|_{(1-  \left(2\tau-\tau^2/\beta\right)\mu_{l})\vI+\vM_1^{-1/2}\vM_2\vM_1^{-1/2}}^2\\
	\leq &  {{1- (2\tau-\tau^{2}/\beta)\mu_{l}+C_1} \over {1 +2\tau\mu_h}+C_1}\|\vM_1^{1/2}(\vs^k-\vs^\star)\|_{(1+2\tau\mu_{h})\vI+\vM_1^{-1/2}\vM_2\vM_1^{-1/2}}^2\\
	=    &  {{1- (2\tau-\tau^{2}/\beta)\mu_{l}+C_1} \over {1 +2\tau\mu_h}+C_1}\|\vs^k-\vs^\star\|_{(1+2\tau\mu_{h})\vM_1+\vM_2}^2.
	\end{align*}
	Therefore, the inequality~\eqref{eqn:linear1} is proved. \qed
\end{proof}

Note that paper~\cite{chen2013primal} proves the linear convergence rate for the case with $l^*(\vs)\equiv0$ and $\vM_2=0$ as
\begin{align*}
\max\left( 1-{\min_{\vx:\|\vx\|=1}\|\vA\vA^\top\vx\|\over\|\vA\vA^\top\|},  1- (2\tau-\tau^{2}/\beta)\mu_{f}\right)
\end{align*}
under the additional assumption that $\vA\vA^\top$ is surjective. 
However, $\mu_h>0$ is not required.

Next, we compare this result with the linear convergence rate of Condat-Vu in~\cite{boct2015convergence} by letting $h^*(\vs)\equiv 0$ and $\vM_2=\vzero$, $\vP=\vI$, $\vD=\vI$. For simplicity, we assume that $f$ and $l^*$ are both $\mu$-strongly convex and have $L$-Lipschitz continuous gradients. The linear convergence rate of Condat-Vu  is 
$${4\over 4+\min\left({\mu^2\over L^2},\sqrt{\mu^2\over \|\vA\vA^\top\|}\right)},$$
with the primal and dual stepsizes in the order of ${\mu/L^2}$. However, if we let $\tau=\sigma$ in~\eqref{for:PD3O_nog}, then we have $\mu_l\geq \mu =\mu_f$ and $\beta = (1-\tau^2\|\vA\vA^\top\|)/L$ in Assumptions~\ref{asp:1} and~\ref{asp:2}. In addition, we let $\tau=\beta$, then the linear convergence rate in  Theorem~\ref{thm:main_linear} becomes 
$$1-\tau \mu =1-{2\mu\over\sqrt{L^2+4\|\vA\vA^\top\|}+L}.$$
We can see that the linear convergence rate of~\eqref{for:PD3O_nog} is much better than that of Condata-Vu in~\cite{boct2015convergence}.

\subsection{Tight upper bound for the stepsizes}\label{sec:optimal_bdd}
A very simple example was provided in~\cite{he2016positive} to show the upper bound's tightness for a case without infimal convolution. In this subsection, we provide another example to show the tightness for a case with infimal convolution. This result will be applied to decentralized consensus optimization in the next section. Given a self-adjoint positive definite  operator $\vD$, we consider the following optimization problem:
$$\Min_\vx~{\va^\top\vx}+{\vx^\top\vA^\top\vD^{-1}\vA\vx\over2}.$$  
It is a special case of~\eqref{for:main_problem} with $f(\vx)=\va^\top\vx$, $h^*(\vy)=0$, and $l^*(\vy)=\vy^\top\vD\vy/2$. The primal-dual iteration~\eqref{for:PD3O_nog} after a change of order is 
\begin{align*}
\vx^{k+1} = ~&\vx^k-\tau\vP^{-1}\va-\tau \vP^{-1}\vA^\top\vs^{k},\\
\vs^{k+1} = ~&(\vI-\tau\sigma\vD^{-1}\vA\vP^{-1}\vA^\top-\sigma\vI)\vs^k+\sigma\vD^{-1}\vA \vx^{k+1}-\tau\sigma\vD^{-1}\vA\vP^{-1}\va.
\end{align*}
Denote $\widetilde\vD=\vD^{-1/2}\vA\vP^{-1}\vA^\top\vD^{-1/2}$. Then the iteration is equivalent to 
	\begin{align*}
	\begin{bmatrix}
	\vD^{-1/2}\vA\vx^{k+1}\\
	\vD^{1/2}\vs^{k+1}
	\end{bmatrix}= &\begin{bmatrix}
	\vI &  -\tau\widetilde\vD\\
	\sigma\vI & (1-\sigma)\vI-2\tau\sigma\widetilde{\vD}
	\end{bmatrix}\begin{bmatrix}
	\vD^{-1/2}\vA\vx^k\\
	\vD^{1/2}\vs^{k}
	\end{bmatrix}\\
	&-\begin{bmatrix}
	\tau{\vD}^{-1/2}\vA\vP^{-1}\va\\	2\tau\sigma{\vD}^{-1/2}\vA\vP^{-1}\va
	\end{bmatrix}.
	\end{align*}
The convergence of this iteration for any given initial $(\vs^0,\vx^0)$ requires the magnitudes of the eigenvalues of the operator 
	$$\begin{bmatrix}
	\vI &  -\tau\widetilde\vD\\
	\sigma\vI & (1-\sigma)\vI-2\tau\sigma\widetilde{\vD}
	\end{bmatrix}$$
being less than 1. Since $\widetilde{\vD}$ is self-adjoint, we need the magnitudes of the eigenvalues of 
	$$\widetilde\vM:=\left[\begin{array}{cc}
	1 &  -\tau\lambda\\
	\sigma & 1 -\sigma-2\tau\sigma\lambda
	\end{array}\right]$$
being less than 1 for all $\lambda$ being the eigenvalues of $\widetilde{\vD}$. We calculate the determinant of $\widetilde\vM-d\vI$ for any $d$ below:
\begin{align*}
\det(\widetilde\vM-d\vI) = d^2-(2-\sigma-2\tau\sigma\lambda)d+(1-\sigma-\tau\sigma\lambda).
\end{align*}
Particularly, the convergence requires $\det(\widetilde\vM+\vI)>0$, that is 
\begin{align*}
1+(2-\sigma-2\tau\sigma\lambda)+(1-\sigma-\tau\sigma\lambda)=4-3\tau\sigma\lambda -2\sigma>0.
\end{align*}
It is equivalent to 
$$\sigma < 2\left({1}-{3\over 4}\tau\sigma\|\widetilde{\vD}\|\right).$$
On the other hand, we proved the convergence of the primal-dual algorithm under the condition
$$\tau< 2\beta = 2\lambda_{\min}(\vD^{-1/2}\vM_1\vD^{-1/2})={2\tau\over\sigma}(1-\theta\tau\sigma\|\widetilde{\vD}\|)$$
for some $\theta\in(3/4,1]$. It shows that the upper bounds for the stepsizes in this paper are optimal.

	\section{Application in decentralized consensus optimization}\label{sec:compare}
	In this section, we first show that algorithm~\eqref{for:PD3O_nog} recovers PG-EXTRA~\cite{shi2015proximal} for decentralized consensus optimization. Then we provide its convergence result under a weaker condition than that in~\cite{shi2015proximal} and a tight upper bound for the stepsize. Note that PG-EXTRA was shown to be equivalent to Condat-Vu for a problem without infimal convolution~\cite{wu2018decentralized}, but this equivalence can not give the weaker condition for convergence and the tight upper bound for the stepsize.
	
	We use the same notation as~\cite{shi2015proximal}. The decentralized consensus problem is 
	\begin{align*}
	\Min_{x\in\vR^p}~\sum_{i=1}^{n}s_{i}(x)+r_{i}(x),
	\end{align*}
	where $s_i:\vR^p\rightarrow\vR$ and $r_i:\vR^p\rightarrow (-\infty,+\infty]$ are proper lsc convex functions held privately by the node $i$ to encode the node's objective function. The objective of decentralized consensus is minimizing the sum of all private objective functions while using information exchange between neighboring nodes in a network. Here $s_i$ has a Lipschitz continuous gradient with parameter $L>0$ and the proximal mapping of $r_i$ is simple.	We let $x_i$ be one copy of $x$ kept at node $i$. These $\{x_i\}_{i=1}^n$ are not the same in general, and we say that it is consensual if they are the same. Stacking all the copies together, we define 
	\begin{equation*}
	\vx:=
	\begin{pmatrix}
	-& x_1^\top &- \\ 
	-	& x_2^\top &- \\ 
	& \vdots & \\ 
	-& x_n^\top &- 
	\end{pmatrix}
	\in \vR^{n\times p},
	\end{equation*}
	and
	\begin{align*}
	{s}(\vx)=\sum_{i=1}^{n}s_{i}(x_i),\quad {r}(\vx)=\sum_{i=1}^{n}r_{i}(x_i).
	\end{align*}
	Then the decentralized consensus problem becomes 
	\begin{align*}
	\Min\limits_{\vx}~&s(\vx)+r(\vx),
	\mbox{ subject to } x_1=x_2=\cdots=x_n.
	\end{align*}
	The gradient of $s$ at $\vx$ is written in the following matrix form:
	\begin{equation*}
	\nabla s (\vx):=
	\begin{pmatrix}
	-& \left(\nabla s_{1}(x_{1})\right)^{\top} &- \\ 
	-	& \left(\nabla s_{2}(x_{2})\right)^{\top} &- \\ 
	& \vdots & \\ 
	-& \left(\nabla s_{n}(x_{n})\right)^{\top} &- 
	\end{pmatrix}
	\in \vR^{n\times p},
	\end{equation*}
	and $\|\cdot\|_F$ is the Frobenius norm for a matrix in $\vR^{n\times p}$. 
	One iteration of PG-EXTRA reads as
	\begin{subequations}\label{for:pg_extra}
		\begin{align}
		\vz^{k+1} & = \vz^{k}-\vx^k+{\vI+\vW\over 2}(2\vx^k-\vx^{k-1})-\alpha\nabla s(\vx^k)+\alpha\nabla s(\vx^{k-1}), \label{for:pg_extra_a}\\
		\vx^{k+1} & = \argmin\limits_{\vx}~r(\vx)+\frac{1}{2\alpha}\|\vx-\vz^{k+1}\|_F^{2},
		\end{align}
	\end{subequations}
	where $\alpha$ is the stepsize and $\vW$ is a symmetric matrix that represents information exchange between neighboring nodes. We have $\vI-\vW$ being positive semidefinite, so we can find $\vA$ such that $\vI-\vW=\vA\vA^\top$. In addition, we assume that $\Null(\vA^\top)=\Null(\vI-\vW)=\Span(\vone_{n\times 1})$, which means that $\vA^\top\vx=\vzero$ is equivalent to $x_1=x_2=\cdots=x_n$. Therefore, the decentralized consensus problem becomes 
	\begin{align*}
	\Min\limits_{\vx}~&s(\vx)+r(\vx) \mbox{ subject to } \vA^\top\vx=\vzero.
	\end{align*}
	The equivalence between PG-EXTRA and Condat-Vu can be obtained via considering the primal problem with an indicator function for the constraint~\cite{wu2018decentralized}. Here, we consider its dual problem in the following form:
	\begin{equation}\label{for:PG_EXTRA_dual_prob}
	\Min_\vy~{r^*}\square {s^*}(\vA\vy),
	\end{equation}
	where $r^*$ and $s^*$ are convex conjugate functions of $r$ and $s$, respectively. 
	We apply~\eqref{for:PD3O_nog} to~\eqref{for:PG_EXTRA_dual_prob} ($h\Rightarrow r^*,~l\Rightarrow s^*,~\vx\Rightarrow\vy,~\vs\Rightarrow\vt$) and arrive at 
	\begin{subequations}\label{for:PD3O_PG_EXTRA}
		\begin{align}
		\vz^{k+1}     & = (\vI-\tau\sigma \vA\vA^\top)\vt^k+{\sigma}\vA\vy^k-{\sigma}\nabla s(\vt^k),\label{for:PD3O_PG_EXTRA1}\\
		\vt^{k+1}     & =
		\argmin\limits_\vt~\{r(\vt)+{1\over{2\sigma}}\|\vt-\vz^{k+1}\|_F^2\},\label{for:PD3O_PG_EXTRA2}\\
		\vy^{k+1}     & =\vy^k-\tau\vA^\top \vt^{k+1}.\label{for:PD3O_PG_EXTRA3}
		\end{align}
	\end{subequations}
	Combining~\eqref{for:PD3O_PG_EXTRA1} and~\eqref{for:PD3O_PG_EXTRA3}, we get 
	\begin{align}\label{for:PD3O_PG_EXTRA_2step}
	\vz^{k+1} = \vz^k -\vt^k+(\vI-\tau\sigma \vA\vA^\top)(2\vt^k-\vt^{k-1})
	-{\sigma}\nabla s(\vt^k)+{\sigma}\nabla s(\vt^{k-1}).
	\end{align}
	We let $\tau\sigma={1\over2}$ and $\sigma=\alpha$, then~\eqref{for:PD3O_PG_EXTRA_2step} is exactly~\eqref{for:pg_extra_a} with $\vt\Rightarrow \vx$. 
	Because $\vM={2\tau^2}(\vI-(1/2)\vA\vA^\top)={\tau^2}(\vI+\vW)$ is positive definite, we can let $\vM_1=\vM$.
	If $\{\nabla s_i(x)\}_{i=1}^n$ are Lipschitz continuous with constant $L>0$, the other condition for convergence is 
	\begin{align*}
	\tau< 2\beta \leq {2\over L}\lambda_{\min}(\vM_1)= {2\tau^2\over L}\lambda_{\min}(\vI+\vW),
	\end{align*}
	where the second inequality comes from 
	\begin{align*}
	\langle \nabla s(\tilde\vx)-\nabla s(\bar\vx),\tilde\vx-\bar\vx\rangle \geq {1\over L}\|\tilde\vx-\bar\vx\|^2\geq {1\over L}\lambda_{\min}(\vM_1)\|\tilde\vx-\bar\vx\|_{\vM_1^{-1}}^2.
	\end{align*}
	Therefore, we obtain the condition on the stepsize 
	$$\alpha={1\over2\tau}<\lambda_{\min}(\vI+\vW)/L.$$
	This is exactly the upper bound in~\cite{shi2015proximal}.
	
	The previous upper bound is obtained with $\theta=1$. 
	As we mentioned before, we can choose $\theta$ to be close to $3/4$ to obtain large stepsizes.
	By letting $\theta=3/4+\epsilon$ with an arbitrary small $\epsilon>0$, we have $\vM_1={2\tau^2}(\vI-(3/4+\epsilon)(1/2)\vA\vA^\top)$ and $\vM_2=(1/4-\epsilon)\tau^2\vA\vA^\top$. 
	Then a larger upper bound for the stepsize 
	\begin{align*}
	\alpha={1\over2\tau}\leq&\lambda_{\min}(2\vI-(3/4+\epsilon)\vA\vA^\top)/L\\
	< &\lambda_{\min}(2\vI-(3/4)\vA\vA^\top)/L= ((3/4)\lambda_{\min}(\vI+\vW)+1/2)/L,
	\end{align*}
	is derived. 
	
	The new relaxed condition for $\vW$ is $\vM_1={\tau^2}(2\vI-(3/4+\epsilon)\vA\vA^\top)={\tau^2}((5/4-\epsilon)\vI+(3/4+\epsilon)\vW)$ being positive definite. That is $5\vI+3\vW$ is positive definite. Also, the special example in Subsection~\ref{sec:optimal_bdd} shows that the condition for the stepsize of PG-EXTRA can not be weakened. Its linear convergence without $\{r_i\}$ is discussed in~\cite{li2019linear} under the relaxed condition for $\vW$ and stepsize.

\section{Conclusion}
In this paper, we consider the primal-dual algorithm in~\cite{chen2013primal,drori2015simple,loris2011generalization} to solve the problem $f(\vx)+h\square l(\vx)$ and show its convergence under a weaker condition. We provide an example to show that this condition can not be weakened for a general problem. This result recovers and is more general than the positive-indefinite linear ALM proposed in~\cite{he2016positive}. Then we apply this result to decentralized consensus optimization and obtain the tight upper bound for the stepsize in PG-EXTRA.

\begin{acknowledgements}
	The authors would like to thank the anonymous reviewers for the helpful comments and suggestions that improve this paper.
\end{acknowledgements}

\bibliographystyle{spmpsci}      
\bibliography{PM3O_threeOverFour}

%
%


\end{document}